\newtheorem{theorem}{\bf Theorem}[section]
\newtheorem{define}[theorem]{\bf Definition}
\newtheorem{corollary}[theorem]{\bf Corollary}
\newtheorem{lemma}[theorem]{\bf Lemma}
\newtheorem*{theorem*}{Theorem}
\numberwithin{equation}{section}
\newcommand{\RR}{\mathbb R}
 \DeclareMathOperator{\Ric}{Ric}
\begin{document}

\title[Vanishing properties of $p$-harmonic $\ell$-forms on Riemannian
manifolds]{Vanishing properties of $p$-harmonic $\ell$-forms on
Riemannian manifolds}
\def\cfac#1{\ifmmode\setbox7\hbox{$\accent"5E#1$}\else\setbox7\hbox{\accent"5E#1}\penalty 10000\relax\fi\raise 1\ht7\hbox{\lower1.0ex\hbox to 1\wd7{\hss\accent"13\hss}}\penalty 10000\hskip-1\wd7\penalty 10000\box7 }

\author[N.T. Dung]{Nguyen Thac Dung}
\address[N.T. Dung]{Department of mathematics, College of Science\\ Vi\^{e}t Nam National University, Ha N\^{o}i, Vi\^{e}t Nam}
\email{\href{mailto: N.T. Dung
<dungmath@gmail.com>}{dungmath@gmail.com}}

\author[P.T. Tien]{Pham Trong Tien}
\address[P.T. Tien]{Department of mathematics, College of Science\\ Vi\^{e}t Nam National University, Ha N\^{o}i, Vi\^{e}t Nam}
\email{\href{mailto: Pham Tien <phamtien@mail.ru>}{phamtien@mail.ru,
phamtien@vnu.edu.vn}}

\begin{abstract}
In this paper, we show several vanishing type theorems for
$p$-harmonic $\ell$-forms on Riemannian manifolds ($p\geq2$). First of all, we consider complete non-compact immersed submanifolds $M^n$ of ${N}^{n+m}$
with flat normal bundle, we prove that any $p$-harmonic $\ell$-forms on $M$ is trivial if $N$ has pure curvature tensor and $M$ satisfies some geometric condition. Then, we obtain a vanishing theorem on Riemannian manifolds with weighted
Poincar\'{e} inequality. Final, we investigate complete simply connected, locally
conformally flat Riemannian manifolds $M$ and point out that there is no nontrivial $p$-harmonic $\ell$-form on $M$ provided that $\operatorname{Ric}$ has suitable bound.

\noindent {\it 2000 Mathematics Subject Classification}: 53C24,
53C21

\noindent {\it Key words and phrases}:  $p$-harmonic functions; Flat
normal bundle; Locally conformally flat; Weighted $p$-Laplacian;
Weighted Poincar\'{e} inequality.
\end{abstract}
\maketitle \vskip0.4cm
\section{Introduction}
Suppose that $M$ is a complete noncompact oriented Riemannian
manifold of dimension $n$. At a point $x\in M$, let
$\left\{\omega_1, \ldots, \omega_n\right\}$ be a positively oriented
orthonormal coframe on $T_x^{*}(M)$, the Hodge star operator acting
on the space of smooth $\ell$-forms $\Lambda^{\ell}(M)$ is given by
$$ *(\omega_{i_1}\wedge\ldots\wedge\omega_{i_\ell})=\omega_{j_1}\wedge\ldots\wedge\omega_{j_{n-\ell}}, $$
where $j_1, \ldots, j_{n-\ell}$ is selected such that
$\{\omega_{i_1},\ldots,\omega_{i_\ell},
\omega_{j_1},\ldots,\omega_{j_{n-\ell}}\}$ gives a positive
orientation. Let $d$ is the exterior differential operator, so its
dual operator $\delta$ is defined by
$$ \delta=* d *. $$
Then the Hogde-Laplace-Beltrami operator $\Delta$ acting on the
space of smooth $\ell$-forms $\Omega^{\ell}(M)$ is of form
$$ \Delta =-(\delta d+d\delta). $$
When $M$ is compact, it is well-known that the space of harmonic
$\ell$-forms is isomorphic to its $\ell$-th de Rham cohomology
group. This is not true when $M$ is non-compact but the theory of
$L^{2}$ harmonic forms still have some interesting applications. For
further results, we refer the reader to \cite{Car1, Car2}. In
\cite{Li}, Li studied Sobolev inquality on spaces of harmonic
$\ell$-forms on compact Riemannian manifolds. He gave estimates of
the bottom of $\ell$-spectrum and proved that the space of harmonic
$\ell$-forms is of finite dimension provided that the Ricci
curvature is bounded from below. In \cite{Tanno96}, Tanno studied
$L^{2}$ harmonic $\ell$-form on complete orientable stable minimal
hypersurface $M$ immersed in the Euclidean space $\RR^{n+1}$. He
showed that there are no non trivial $L^{2}$ harmonic $\ell$-forms
on $M$ if $n\leq 4$. Later, Zhu generalized Tanno's results to
manifolds with non-negative isotropic curvature (see
\cite{Zhu2011}). He also proved in \cite{Zhu01} that the Tanno's
results are true if $M^{n}, n\leq 4$ be a complete noncompact
strongly stable hypersurface with constant mean curvature in
$\RR^{n+1}$ or $\mathbb{S}^{n+1}$. Recently, in \cite{Lin15a}, Lin
investigated spaces of $L^{2}$ harmonic $\ell$-forms
$H^{\ell}(L^{2}(M))$ on submanifolds in Euclidean space with flat
normal bundle. Assumed that the submanifolds are of finite total
curvature, Lin showed that the space $H^{\ell}(L^{2}(M))$ has finite
dimension (See also \cite{Zhu2015} for the case $\ell=2$). For further
results in this direction, we refer to \cite{Lin15b, Lin15c, Lin15d,
Seo, Zhu01, Zhu2011} and the references therein. It is also worth
to notice that the main tools to study the spaces of harmonic
$\ell$-forms are the Bochner formula and refined Kato type
inequalities. In 2000, Calderbank et al. gave very general forms of
Kato type inequalities in \cite{Calderbank2000}. Then in
\cite{XDWang2002}, Wang used them to prove a vanishing property of
the space of $L^{2}$ harmonic $\ell$-forms on convex cocompact
hyperbolic manifolds. Later, in \cite{WanXin2004}, Wan and Xin
studied $L^{2}$ harmonic $\ell$-forms on conformally compact
manifolds with a rather weak boundary regularity assumption.
Recently, in \cite{CiboZhu2012}, Cibotaru and Zhu introduced a proof
of the mentioned results from \cite{Calderbank2000} avoiding as much
as possible representation theoretic technicalities. The refined
Kato type inequalities they obtained also refined the ones used in
\cite{XDWang2002, WanXin2004}.

On the other hand, the $p$-Laplacian operator on a Riemannian
manifold $M$ is defined by
$$\Delta_p u := \operatorname{Div}(|\nabla u|^{p-2} \nabla u)$$
for any function $u\in W_{loc}^{1,p} (M)$ and $p> 1$, which arises
as the Euler-Lagrange operator associated to the $p$-energy
functional
$$E_p (u):=\int_M |\nabla u|^p.$$
Therefore, if $u$ is a smooth $p$-harmonic function then $du$ is a
$p$-harmonic $1$-forms. We refer the reader to \cite{Mo2007, KL2009}
for the connection between $p$-harmonic functions and the inverse
mean curvature flow. Motivated by the above beautiful relationship
between the space of harmonic $\ell$-forms and the geometry of
Riemannian manifolds, it is very natural for us to study the
geometric structures of Riemannian manifolds by using the vanishing
properties of $p$-harmonic $\ell$-forms with finite $L^{q}$ energy
for some $p\geq2$ and $q\geq0$.

Recall that an $\ell$-form $\omega$ on a Riemannian manifold $M$ is
said to be $p$-harmonic if $\omega$ satisfies $d\omega=0$  and
$\delta(|\omega|^{p-2}\omega)=0$. When $p=2$, a $p$-harmonic
$\ell$-form is exactly a harmonic $\ell$-form. Some vanishing
propeties of the space of $p$-harmonic $\ell$-forms are given by X.
Zhang in \cite{Zhang}. In particular, Zhang showed that there are no
nontrivial $p$-harmonic $1$-forms in $L^{q}(M), q>0$ if the Ricci
curvature on $M$ is nonnegative. Motivated by Zhang's results, in
\cite{CQS10}, Chang-Guo-Sung proved that any bounded set of
$p$-harmonic $1$-forms in $L^{q}(M), 0<q<\infty$, is relatively
compact with respect to the uniform convergence topology. Recently,
it is showed that the set of $p$-harmonic $1$-forms has closed
relationship with the connectedness at infinity of the manifold, in
particularly, with $p$-nonparabolic ends. In \cite{DS}, the first
author and Seo studied the connectedness at infinity of complete
submanifolds by using the theory of $p$-harmonic function. For
lower-dimensional cases, we proved that if $M$ is a complete
orientable noncompact hypersurface in $\RR^{n+1}$ and
$\delta$-stability inequality holds on $M$, then $M$ has at most one
$p$-nonparabolic end. It was also proved that if $M^n$ is a complete
noncompact submanifold in $\mathbb{R}^{n+k}$ with sufficiently small
$L^n$-norm of the traceless second fundamental form, then $M$ has at
most one $p$-nonparabolic end. For the reader's convenience, let us
recall a definition of the $p$-nonparabolic ends. Let $E \subset M$
be an end of $M$, namely, $E$ is an unbounded connected component of
$M\setminus \Omega$ for a sufficiently large compact subset $\Omega
\subset M$ with smooth boundary. As in usual harmonic function
theory, we define the $p$-parabolicity and $p$-nonparabolicity of
$E$ as follows (see \cite{CCW} and the references therein):
\begin{define}
{\rm An end $E$ of the Riemannian manifold $M$ is called {\it
$p$-parabolic} if for every compact subset $K \subset \overline{E}$
$${\rm cap}_p (K, E):= \inf \int_E |\nabla f|^p = 0$$
where the infimum is taken among all $f\in C^{\infty}_0
(\overline{E})$ such that $f\geq 1$ on $K$. Otherwise, the end $E$
is called {\it $p$-nonparabolic}.}
\end{define}
The first main result in this paper is the below theorem.
\begin{theorem}\label{main1}
Let $M^n\ (n \geq 3)$ be a complete non-compact immersed submanifold
of $N^{n+m}$ with flat normal bundle. Assume that $N^{n+m}$ has pure curvature tensor and the
$(1,n-1)$-curvature of $N^{n+m}$ is not less than $-k$. If one of
the following conditions
\begin{itemize}
\item[1.]
$$
|A|^2 \leq \dfrac{n^2 |H|^2 - 2k}{n-1},
$$
\item[2.]
$$
\dfrac{n^2 |H|^2 - 2k}{n-1} < |A|^2 \leq \dfrac{n^2 |H|^2}{n-1}
\text{ and }
 \lambda_1(M) > \dfrac{kp^2(n-1)}{4(p-1)(n+p-2)},
$$
\item[3.]
the total curvature $\|A\|_n$ is bounded by
$$
\|A\|_n^2 < \min \left\{ \dfrac{n^2}{(n-1)C_S},\
\dfrac{2}{(n-1)C_S}\left[ \dfrac{4(p-1)(n+p-2)}{p^2(n-1)} -
\dfrac{k}{\lambda_1(M)} \right] \right\}
$$
\item[4.] $\sup_M |A|$ is bounded and the fundamental tone satisfies
 $$
\lambda_1(M) > \dfrac{p^2(n-1)(2k + (n-1)\sup_M
|A|^2)}{8(p-1)(n+p-2)},
 $$
\end{itemize}
holds true then every $p$-harmonic $L^{p}$ $1$-form  on $M$ is
trivial. Therefore, $M$ has at most one $p$-nonparabolic end. Here
$C_S$ is the Sobolev constant which depends only on $n$ (see Lemma
\ref{sb-ie}).
\end{theorem}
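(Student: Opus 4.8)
The plan is to reduce the whole statement to a single integral inequality for the scalar $f:=|\omega|$, obtained from a Bochner--Weitzenb\"ock formula sharpened by a refined Kato inequality, and then to close the estimate in four different ways according to the hypothesis in force.

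First I would exploit the $p$-harmonicity of $\omega$. Since $d\omega=0$ and $\delta(|\omega|^{p-2}\omega)=0$, expanding the second equation gives the pointwise relation $\delta\omega=(p-2)f^{-1}\langle\nabla f,\omega\rangle$, so $\omega$ is closed but not coclosed and its codifferential is controlled by $\nabla f$. Feeding this into the Bochner--Weitzenb\"ock formula relating $\Delta|\omega|^2$ to $|\nabla\omega|^2$, $\langle\Delta\omega,\omega\rangle$ and $\Ric(\omega,\omega)$, and applying the refined Kato inequality appropriate to a closed (not coclosed) $1$-form --- which uses the relation above and reduces to the classical $|\nabla\omega|^2\ge\tfrac{n}{n-1}|\nabla f|^2$ when $p=2$ --- I expect, after multiplying by $f^{p-2}\eta^2$ for a cut-off $\eta$ and integrating by parts, to reach the master inequality
$$\frac{4(p-1)(n+p-2)}{p^2(n-1)}\int_M|\nabla\phi|^2\,\eta^2 \le -\int_M \Ric(\omega,\omega)\,f^{p-2}\eta^2 + (\text{cut-off terms}), \qquad \phi:=f^{p/2}.$$
The substitution $\phi=f^{p/2}$, under which $\int_M f^{p-2}|\nabla f|^2=\tfrac{4}{p^2}\int_M|\nabla\phi|^2$, is exactly what manufactures the coefficient appearing in the hypotheses.

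Next I would bound the curvature term from below. Because $M$ has flat normal bundle and $N$ has pure curvature tensor with $(1,n-1)$-curvature $\ge-k$, the Gauss equation yields
$$\Ric(\omega,\omega)\ge\left(\frac{n^2|H|^2-(n-1)|A|^2}{2}-k\right)f^2,$$
so that $-\Ric(\omega,\omega)f^{p-2}\le\big(k-\tfrac{n^2|H|^2-(n-1)|A|^2}{2}\big)\phi^2$. Sending $\eta\uparrow1$ and using the $L^p$ bound on $\omega$ to kill the cut-off terms, the four cases close as follows. In Case~1 the threshold $|A|^2\le\tfrac{n^2|H|^2-2k}{n-1}$ makes the bracket nonpositive, so the right-hand side is $\le0$, forcing $\nabla\phi\equiv0$ and hence $\omega\equiv0$ by $L^p$-integrability. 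In Cases~2 and~4 I estimate $\int_M\phi^2\le\lambda_1(M)^{-1}\int_M|\nabla\phi|^2$ via the fundamental tone, with $B:=k$ in Case~2 (since $|A|^2\le\tfrac{n^2|H|^2}{n-1}$ gives $-\Ric\le kf^2$) and $B:=\tfrac12(2k+(n-1)\sup_M|A|^2)$ in Case~4; the stated lower bounds on $\lambda_1(M)$ are precisely those making $\tfrac{4(p-1)(n+p-2)}{p^2(n-1)}-\lambda_1(M)^{-1}B>0$, whence $\int_M\phi^2=0$. In Case~3 I keep the term $\tfrac{n-1}{2}\int_M|A|^2\phi^2$ and absorb it by H\"older and the Sobolev inequality of Lemma~\ref{sb-ie}, $\int_M|A|^2\phi^2\le C_S\|A\|_n^2\int_M|\nabla\phi|^2$, while controlling the residual $k\int_M\phi^2$ again through $\lambda_1(M)$; the two-part smallness of $\|A\|_n$ then leaves a strictly positive coefficient in front of $\int_M|\nabla\phi|^2$, so once more $\phi\equiv0$. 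In every case $\omega\equiv0$.

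Finally, the statement on ends follows from the standard dictionary between $p$-harmonic $1$-forms and ends: two distinct $p$-nonparabolic ends would produce a nonconstant bounded $p$-harmonic function of finite $p$-energy, whose differential is a nontrivial $p$-harmonic $L^p$ $1$-form, contradicting the vanishing. The main obstacle I anticipate is the constant bookkeeping for general $p\ge2$: isolating the correct refined Kato inequality for a closed-but-not-coclosed $1$-form and carrying its constant faithfully through the $\phi=f^{p/2}$ substitution so that the four thresholds emerge exactly as stated, together with verifying that the cut-off terms genuinely vanish under only the $L^p$ (rather than $L^2$) integrability of $\omega$.
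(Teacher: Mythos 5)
Your plan is essentially the paper's own proof: the paper first proves a general $L^Q$ vanishing theorem (Theorem 3.1) by applying the Bochner formula together with the refined Kato inequality of Lemma 2.2, Lin's lower bound for the Weitzenb\"ock curvature term under the flat-normal-bundle and pure-curvature-tensor hypotheses, and a cutoff/integration-by-parts argument that closes the four cases exactly as you describe (sign of the curvature bracket, $\lambda_1(M)$, the Hoffman--Spruck Sobolev inequality with its $|H|^2$ term supplying the first entry of the min, and $\sup_M|A|$), and then deduces the statement on ends from the existence result of Chang--Chen--Wei. The only deviations are cosmetic --- the paper runs the Bochner identity on the co-closed form $|\omega|^{p-2}\omega$ rather than on $\omega$ weighted by $f^{p-2}$, and handles the degeneracy at $\{\omega=0\}$ via a Duzaar--Fuchs approximation of the cutoff --- so your proposal matches the paper's route.
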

This results generalizes a work of the first author in \cite{Dung2017}
On the other hand, we consider $p$-harmonic $\ell$-forms on
Riemannian manifolds with weighted Poincar\'{e} inequality. Recall
that let $(M^{n}, g)$ be a Riemannian manifold of dimension $n$ and
$\rho\in{\mathcal{C}}(M)$ be a positive function on $M$. We say that
$M$ has a weighted Poincar\'{e} inequality, if
\begin{equation}\label{weighted}
\int_M\rho\varphi^{2}\leq\int_M|\nabla\varphi|^{2}
\end{equation}
holds true for any  smooth function
$\varphi\in{\mathcal{C}}^{\infty}_0(M)$ with compact support in $M$.
The positive function $\rho$ is called the weighted function. It is
easy to see that if the bottom of the spectrum of Laplacian
$\lambda_1(M)$ is positive then $M$ satisfies a weighted
Poincar\'{e} inequality with $\rho\equiv\lambda_1$. Here
$\lambda_1(M)$ can be characterized by variational principle
$$ \lambda_1(M)=\inf\left\{\frac{\int_M|\nabla\varphi|^{2}}{\int_M\varphi^{2}}: \varphi\in{\mathcal{C}}_0^{\infty}(M)\right\}. $$
When $M$ satisfies a weighted Poincar\'{e} inequality then $M$ has
many interesting properties concerning topology and geometry. For
example, in \cite{vieira}, Vieira obtained vanishing theorems for
$L^{2}$ harmonic $1$ forms on complete Riemannian manifolds
satisfying a weighted Poincar\'{e} inequality and having a certain
lower bound of the bi-Ricci curvature. His theorems are improvement
of Li-Wang's and Lam's results (see \cite{Lam, LW1, LW2}). Moreover,
some applications to study geometric structures of minimal
hypersurfaces also are given. We refer to \cite{ChenSung2009,
DungSung2014} and the refrences therein for further results on the
vanishing property of the space of harmonic $\ell$-forms. In the
nonlinear setting, Chang-Chen-Wei studied $p$-harmonic functions
with finite $L^{q}$ energy  in \cite{CCW}, and proved some vanishing
type theorems on Riemannian manifolds satisfying a weighted
Poincar\'{e} inequality. Later, Sung-Wang, Dat and the first author
used theory of $p$-harmonic functions to show some interesting
rigidity propeties of Riemannian manifolds with maximal
$p$-spectrum. (see \cite{DD, SW}). In this paper, we will investigate manifolds with weighted Poincar\'{e} inequality and prove some vanishing results for $p$-harmonic $\ell$-forms on the manifolds. Our results can be considered as a generalization of Vieira's and the first author's results (see \cite{Dung2017}). Finally,  we are also interested in locally conformally flat
Riemannian manifolds, our theorem is the
following vanishing property.
\begin{theorem}\label{main2}
Let $(M^n,g), \ n \geq 3$, be an $n$-dimensional complete, simply
connected, locally conformally flat Riemannian manifold. If one of
the following conditions
\begin{itemize}
\item[1.]
\begin{equation*}
 \|T\|_{n/2} + \dfrac{\|R\|_{n/2}}{\sqrt{n}} < \dfrac{4\left(p-1 + \min\left\{1, \frac{(p-1)^2}{n-1}\right\}\right)}{Sp^2}.
\end{equation*}
\item[2.] the scalar curvature $R$ is nonpositive and
$$
K_{p,n}: = \dfrac{p-1+ \min\left\{1, \frac{(p-1)^2}{n-1}\right\}}{p^2} -
\dfrac{n-1}{\sqrt n(n-2)} > 0
$$
and
$$
\|T\|_{n/2} < \dfrac{4K_{p,n}}{S}=4K_{p,n}\mathcal{Y}(\mathbb{S}^n).
$$
\end{itemize}
holds true, then every $p$-harmonic $1$-form with finite $L^{p} (p\geq2)$
norm on $M$ is trivial, and $M$ must has at most one
$p$-nonparabolic end. Here $T$ is the traceless Ricci tensor and $S$
is the constant given in Lemma \ref{sb-lem-1}.
\end{theorem}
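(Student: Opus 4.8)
The plan is to combine a weighted Bochner formula with a refined Kato inequality and the conformally invariant Sobolev inequality of Lemma~\ref{sb-lem-1}. Let $\omega$ be a $p$-harmonic $1$-form, so $d\omega=0$ and $\delta(|\omega|^{p-2}\omega)=0$, and set $f:=|\omega|$. First I would derive the weighted Bochner formula: starting from the Weitzenb\"ock identity for $1$-forms, whose curvature term is exactly $\operatorname{Ric}(\omega,\omega)$, and feeding in both defining equations of a $p$-harmonic form, one obtains (weakly) a formula of the form
\[
\tfrac12\,\operatorname{div}\!\big(f^{p-2}\nabla f^{2}\big)=f^{p-2}\big(|\nabla\omega|^{2}+\operatorname{Ric}(\omega,\omega)\big)+(p-2)\,f^{p-2}|\nabla f|^{2},
\]
which for $p=2$ reduces to the classical Bochner formula $\tfrac12\Delta f^{2}=|\nabla\omega|^{2}+\operatorname{Ric}(\omega,\omega)$. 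I would then invoke the refined Kato inequality valid for $p$-harmonic $1$-forms, $|\nabla\omega|^{2}\ge\big(1+\min\{1,\tfrac{(p-1)^{2}}{n-1}\}\big)|\nabla f|^{2}$; substituting it collects the gradient terms into a coefficient $p-1+\min\{1,(p-1)^2/(n-1)\}$, precisely the quantity appearing in the hypotheses.

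The second ingredient is the curvature estimate special to the locally conformally flat setting. Since the Weyl tensor vanishes, the full curvature tensor, and in particular the Ricci tensor, is governed by its trace and trace-free parts; writing $\operatorname{Ric}=T+\tfrac{R}{n}g$ with $T$ the traceless Ricci tensor I would bound $\operatorname{Ric}(\omega,\omega)$ below by $\big(-\sqrt{\tfrac{n-1}{n}}\,|T|+\tfrac{R}{n}\big)f^{2}$, using that a symmetric traceless endomorphism has least eigenvalue at least $-\sqrt{(n-1)/n}\,|T|$. This is the step that introduces the $\sqrt n$ and $(n-2)$ factors of the statement. In case~1 one keeps both curvature terms and asks that $\|T\|_{n/2}$ and $\|R\|_{n/2}$ be jointly small; in case~2 the sign hypothesis $R\le 0$ replaces the $L^{n/2}$ bound on $R$, at the cost of the extra positivity requirement $K_{p,n}>0$.

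With these two inputs I would multiply the Bochner inequality by a cutoff $\phi^{2}$, integrate by parts, and pass to the auxiliary function $g:=|\omega|^{p/2}=f^{p/2}$, for which $\int f^{p-2}|\nabla f|^{2}=\tfrac{4}{p^{2}}\int|\nabla g|^{2}$; this substitution is what exposes the factor $4/p^{2}$ in the hypotheses. The curvature defect then appears as $\int|T|\,g^{2}\phi^{2}$ and $\int R\,g^{2}\phi^{2}$, and I would estimate each by H\"older's inequality, pairing $\|T\|_{n/2}$ and $\|R\|_{n/2}$ with $\|g\phi\|_{2n/(n-2)}^{2}$ and then applying the Sobolev inequality of Lemma~\ref{sb-lem-1} (with constant $S=1/\mathcal{Y}(\mathbb{S}^{n})$, available precisely because $M$ is simply connected and locally conformally flat) to bound the latter by $S\int|\nabla(g\phi)|^{2}$. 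Collecting terms yields an inequality of the form $c\int|\nabla g|^{2}\phi^{2}\le(\text{error in }\nabla\phi)$ whose coefficient $c$ is strictly positive exactly under hypothesis~1, respectively hypothesis~2.

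Finally, using the finite $L^{p}$ norm $\int f^{p}=\int g^{2}<\infty$ together with a Caccioppoli-type bound, I would let $\phi$ exhaust $M$ so that the error terms vanish, forcing $\nabla g\equiv0$; hence $f$ is constant, and since $\int_M f^p<\infty$ on the noncompact $M$ this constant is $0$, i.e. $\omega\equiv0$. The statement about ends then follows from the already-established correspondence between nontrivial $p$-harmonic $L^{p}$ $1$-forms and the existence of at least two $p$-nonparabolic ends. I expect the main obstacle to be the exact derivation of the weighted Bochner formula together with the sharp refined Kato constant $\min\{1,(p-1)^2/(n-1)\}$, and the careful bookkeeping of the eigenvalue and Sobolev constants so that they assemble into the precise thresholds $\tfrac{4(p-1+\min\{1,(p-1)^2/(n-1)\})}{Sp^{2}}$ and $4K_{p,n}/S$; a secondary point is justifying that the $L^{p}$ finiteness really kills the cutoff error terms.
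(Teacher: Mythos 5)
Your overall strategy is the one the paper uses: Bochner identity plus a refined Kato inequality, curvature controlled through the decomposition $\operatorname{Ric}=T+\tfrac{R}{n}g$, the $L^{n/2}$ norms paired with the Sobolev inequality of Lemma \ref{sb-lem-1} (and the Yamabe inequality \eqref{ya-ine} in case 2), a Caccioppoli/cutoff argument, infinite volume to kill the constant, and Theorem \ref{twoends} for the statement about ends. However, there is a genuine gap at the very step you yourself flag as the main obstacle. Your ``weighted Bochner formula''
$\tfrac12\operatorname{div}(f^{p-2}\nabla f^{2})=f^{p-2}(|\nabla\omega|^{2}+\operatorname{Ric}(\omega,\omega))+(p-2)f^{p-2}|\nabla f|^{2}$
is equivalent, after expanding the divergence, to $\tfrac12\Delta f^{2}=|\nabla\omega|^{2}+\operatorname{Ric}(\omega,\omega)$ on $\{\omega\neq0\}$; this discards the term $\langle\Delta\omega,\omega\rangle=-\langle d\delta\omega,\omega\rangle$, which does not vanish for $p>2$ (one only has $\delta(|\omega|^{p-2}\omega)=0$, so $\delta\omega=(p-2)|\omega|^{-1}\,i_{\nabla|\omega|}\omega$ on $M_{+}$, not $\delta\omega=0$). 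The paper sidesteps this by applying the ordinary Bochner formula to the co-closed form $\eta:=|\omega|^{p-2}\omega$, keeping the term $\langle\delta d\eta,\eta\rangle$, and after integration by parts bounding $\int\langle d\eta,d(\varphi^{2}\omega)\rangle$ via the pointwise estimate $|d\varphi\wedge\omega|\le|d\varphi|\,|\omega|$ of Lemma \ref{L32}; with $q=0$ this term contributes only cross terms in $\nabla\varphi$ and so does not spoil the leading coefficient $p-1+\kappa_p$. Your version needs the analogous accounting of the $d\delta\omega$ contribution before the coefficient bookkeeping can be trusted. Relatedly, the refined Kato inequality actually available (Lemma \ref{kt-ine}, from \cite{DS}) is $|\nabla(|\omega|^{p-2}\omega)|^{2}\ge A_{p,n,1}\,|\nabla|\omega|^{p-1}|^{2}$, i.e.\ it is stated for the rescaled form, and the gain $\kappa_p=(p-1)^2(A_{p,n,1}-1)$ emerges only after the identity is written in terms of $|\omega|\Delta|\omega|^{p-1}$; the inequality $|\nabla\omega|^{2}\ge(1+\kappa_p)|\nabla|\omega||^{2}$ you invoke for $\nabla\omega$ itself is not the cited result and would need a separate proof.

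Two smaller points. Your eigenvalue bound for the traceless part, $\lambda_{\min}(T)\ge-\sqrt{(n-1)/n}\,|T|$, together with keeping the exact term $\tfrac{R}{n}|\omega|^{2}$, is sharper than the paper's Lemma \ref{ric-lem} ($\operatorname{Ric}\ge-|T|g-\tfrac{|R|}{\sqrt n}g$); it would therefore still yield the stated theorem, but it does not ``introduce the $\sqrt n$ and $(n-2)$ factors of the statement'' as you claim --- those come precisely from the cruder bound $|R|/\sqrt n$ combined with the Yamabe constant $\tfrac{4(n-1)}{n-2}$, and your constants would come out as $\sqrt{(n-1)/n}$ and $1/n$ instead. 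Finally, the conclusion from $\nabla g\equiv0$ requires knowing $M$ has infinite volume; this is supplied by Lemma \ref{sb-lem-1}, and is worth stating explicitly since ``noncompact'' alone does not suffice.
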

The rest of this paper is organized as follows. In Section 2, we recall some basic notations and useful fact on theory of smooth $\ell$-forms. Then, we study $p$-harmonic $\ell$-forms in submanifolds of $N^{n+m}$ with flat normal bundle and pure curvature tensor. We will give a proof of Theorem \ref{main1} in this section. In Section 4, we derive some vanishing properties for $p$-harmonic $\ell$-forms on manifolds with weighted Poincar\'{e} inequality. Finally, in Section 5, we consider locally conformally flat Riemannian manifolds and give a proof of Theorem \ref{main2}.
\section{Preliminary notations}
In this paper for $n \geq 3,\ 1 \leq \ell \leq n-1$ and $p \geq 2,\
q \geq 0$, we denote
$$C_{n, \ell}:= \max\{\ell, n-\ell\}$$
and
$$
A_{p,n,\ell} =
\begin{cases}
1+\dfrac{1}{\max{\ell, n-\ell}}, & \text{ if } p =2 \\
1+\dfrac{1}{(p-1)^2}\min\left\{1, \dfrac{(p-1)^2}{n-1}\right\}, & \text{ if } p > 2 \text{ and } \ell =1 \\
1, & \text{ if } p > 2 \text{ and } 1 < \ell \leq n-1. \\
\end{cases}
$$
Hence,
$$
(p-1)^2(A_{p,n,\ell}-1) =
\begin{cases}
\dfrac{1}{\max\{\ell, n-\ell\}},&\text{ if } p=2 \text{ and } 1 \leq \ell \leq n-1 \\
\min\left\{1, \dfrac{(p-1)^2}{n-1}\right\}, & \text{ if } p > 2 \text{ and } \ell =1 \\
0, & \text{ if } p > 2 \text{ and } 1 < \ell \leq n-1.
\end{cases}
$$

We will use the following Sobolev inequality.

\begin{lemma}[\cite{Spruck}]\label{sb-ie}
Let $M^n\ (n \geq 3)$ be an $n$-dimensional complete submanifold in
a complete simply-connected manifold with nonpositive sectional
curvature. Then for any $f \in W_0^{1,2}(M)$ we have
$$
\left( \int_M |f|^{\frac{2n}{n-2}} dv \right)^{\frac{n-2}{n}} \leq
C_S \int_M \left(|\nabla f|^2 + |H|^2f^2\right)dv,
$$
where $C_S$ is the Sobolev constant which depends only on $n$.
\end{lemma}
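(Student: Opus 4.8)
The plan is to recognize this as the Hoffman--Spruck Sobolev inequality \cite{Spruck} and to reconstruct its proof by the Michael--Simon monotonicity method, first establishing the $L^1$ form with exponent $n/(n-1)$ and then bootstrapping to the $L^2$ exponent $2n/(n-2)$. Since the ambient manifold is complete, simply connected and of nonpositive sectional curvature, it is a Cartan--Hadamard manifold, so for a fixed base point $x_0$ the extrinsic distance $r=\dist_N(\cdot,x_0)$ is smooth away from $x_0$ (there is no cut locus) and the Hessian comparison theorem yields
\[
\Hess_N r^2 \geq 2\,g_N .
\]
This is precisely the sign that renders all curvature error terms favorable and keeps the final constant dependent only on $n$.

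First I would record the trace formula for a radial function restricted to $M$, namely $\Delta_M \phi(r) = \operatorname{tr}_M \Hess_N \phi(r) + \langle \nabla_N \phi(r), \vec H\rangle$, where $\vec H$ is the mean curvature vector, so that $|\langle \nabla_N \phi, \vec H\rangle|\le |\phi'(r)|\,|H|$. Taking $\phi(r)=r^2$ and combining with the comparison above gives the differential inequality $\Delta_M r^2 \geq 2n - 2r\,|H|$ on $M$. Integrating this over the extrinsic balls $M\cap B_\rho(x_0)$ and invoking the coarea formula on $M$, I would derive a monotonicity formula asserting that a suitable weighted density $\rho\mapsto \rho^{-n}\operatorname{Vol}(M\cap B_\rho(x_0))$, corrected by a term controlled by $\int|H|$, is monotone nondecreasing. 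Running the same computation with the measure $f\,dv$ in place of $dv$ produces, for nonnegative $f$, a pointwise lower bound for the density of $f\,dv$ at a point in terms of $\int_M(|\nabla f| + |H|\,f)$; this is the step at which the mean curvature enters the Sobolev estimate.

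Next I would apply a Vitali-type covering argument on the super-level set of this density to convert the pointwise density bounds into the integral $L^1$ inequality $\bigl(\int_M |f|^{n/(n-1)}\bigr)^{(n-1)/n}\le c(n)\int_M(|\nabla f| + |H|\,|f|)$ for $f\in W^{1,1}_0(M)$. To reach the stated exponent I would apply this to $g=|f|^{\gamma}$ with $\gamma=2(n-1)/(n-2)$, so that $g^{n/(n-1)}=|f|^{2n/(n-2)}$ and $\nabla g=\gamma\,|f|^{\gamma-1}\nabla f$. Hölder's inequality applied to $\int_M |f|^{\gamma-1}(|\nabla f| + |H|\,f)$, together with the absorption of one factor of $\bigl(\int |f|^{2n/(n-2)}\bigr)^{(n-2)/n}$ from the right-hand side (valid because $n>2$), yields
\[
\Bigl(\int_M |f|^{2n/(n-2)}\Bigr)^{(n-2)/n}\le C_S \int_M\bigl(|\nabla f|^2 + |H|^2 f^2\bigr),
\]
after using $2|\nabla f|\,|H|\,|f|\le |\nabla f|^2+|H|^2f^2$ to merge the cross term, with $C_S=C_S(n)$. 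A standard density argument then extends the estimate from smooth compactly supported functions to all $f\in W^{1,2}_0(M)$.

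The hard part is the monotonicity formula in the Riemannian (rather than Euclidean) ambient setting: one must control the curvature corrections that appear when comparing $\Hess_N r^2$ with its Euclidean model, and handle the behaviour of $r$ near $x_0$. Here the Cartan--Hadamard hypothesis is exactly what is needed, since the absence of a cut locus makes $r$ globally smooth off $x_0$ and the nonpositive curvature forces $\Hess_N r^2\ge 2g_N$ with the favorable sign, so the error terms only help and the resulting Sobolev constant is universal in $n$. Arranging the covering so that overlaps do not degrade this constant, and keeping the exponent bookkeeping in the bootstrap clean, are the remaining technical points.
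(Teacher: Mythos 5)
The paper offers no proof of this lemma at all---it is quoted directly from Hoffman--Spruck \cite{Spruck}---and your reconstruction follows exactly the argument of that reference: the Michael--Simon monotonicity method (via $\Delta_M r^2 \geq 2n - 2r|H|$ from the Cartan--Hadamard Hessian comparison, a weighted density monotonicity, and a Vitali covering) to get the $L^1$ inequality with exponent $n/(n-1)$, then the substitution $g=|f|^{2(n-1)/(n-2)}$ with H\"older and absorption to reach the $L^2$ exponent. Your proposal is correct and is essentially the proof behind the citation; the only loose spot is cosmetic---the absorbed factor of $\bigl(\int_M |f|^{2n/(n-2)}\bigr)$ carries power $1/2$, leaving exponent $(n-2)/(2n)$ on the left, which only after squaring yields the stated $(n-2)/n$.
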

An important ingredient in our methods is the following refined Kato
inequality. In order to state the inequality, let us recall some
notations. Suppose that $\left\{e_1, \ldots, e_n\right\}$ is an
orthonormal basic of $\RR^n$. Let
$\theta_1:\Lambda^{\ell+1}\RR^n\to\RR\otimes\Lambda^{\ell}\mathbb{R}^n$ given by
$$ \theta_1(v_1\wedge\ldots\wedge v_{\ell+1})=\frac{1}{\sqrt[]{\ell+1}}\sum\limits_{j=1}^{\ell+1}(-1)^{j}v_j\otimes v_1\wedge\ldots\wedge\widehat{v_j}\wedge\ldots\wedge v_{\ell+1} $$
and $\theta_2: \Lambda^{\ell-1}\RR^n\to\RR\otimes\Lambda^{\ell}\RR^n$ by
$$ \theta_2(\omega)=-\frac{1}{\sqrt[]{n+1-\ell}}\sum\limits_{j=1}^{n}e_j\otimes(e_j\wedge\omega). $$
\begin{lemma}[\cite{Li:geometricanalysis, Calderbank2000, DS}]\label{kt-ine}
For $p\geq2, \ell\geq1$, let $\omega$ be an $p$-harmonic $\ell$-form on
a complete Riemannian manifold $M^n$. The following inequality holds
\begin{equation}\label{kato}
\left| \nabla \left( |\omega|^{p-2}\omega \right) \right|^2 \geq
A_{p, n , \ell} \left| \nabla |\omega|^{p-1} \right|^2.
\end{equation}
Moreover, when $p=2, \ell>1$ then the equality holds if and only if
there exixts a $1$-form $\alpha$ such that
$$ \nabla\omega=\alpha\otimes\omega-\frac{1}{\sqrt[]{\ell+1}}\theta_1(\alpha\wedge\omega)+\frac{1}{\sqrt[]{n+1-\ell}}\theta_2(i_\alpha\omega). $$
\end{lemma}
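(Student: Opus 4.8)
The plan is to reduce the stated inequality to an equivalent intrinsic Kato inequality and then to treat the three regimes in the definition of $A_{p,n,\ell}$ separately. I work on the open set where $\omega\ne0$ (on its complement both sides of $(\ref{kato})$ vanish) and set $f=|\omega|$, so that $|\omega|^{p-2}\omega=f^{p-2}\omega$ and $|\omega|^{p-1}=f^{p-1}$. Differentiating, and using $\langle\omega,\nabla_i\omega\rangle=f\,\nabla_i f$ together with $|\omega|^2=f^2$, a direct computation gives
$$\left|\nabla(f^{p-2}\omega)\right|^2=p(p-2)f^{2p-4}|\nabla f|^2+f^{2p-4}|\nabla\omega|^2,\qquad \left|\nabla f^{p-1}\right|^2=(p-1)^2f^{2p-4}|\nabla f|^2.$$
Dividing by $f^{2p-4}$ and using the identity $(p-1)^2-p(p-2)=1$, one sees that $(\ref{kato})$ is equivalent to the pointwise inequality
$$|\nabla\omega|^2\ge B_{p,n,\ell}\left|\nabla|\omega|\right|^2,\qquad B_{p,n,\ell}:=1+(p-1)^2\bigl(A_{p,n,\ell}-1\bigr),$$
which I call $(\star)$. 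From the table for $(p-1)^2(A_{p,n,\ell}-1)$ this constant is $B_{p,n,\ell}=1+1/C_{n,\ell}$ when $p=2$, $B_{p,n,\ell}=1+\min\{1,(p-1)^2/(n-1)\}$ when $p>2,\ \ell=1$, and $B_{p,n,\ell}=1$ when $p>2,\ \ell>1$; in every case $B_{p,n,\ell}\le2$. Everything thus reduces to proving $(\star)$.

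When $p>2$ and $1<\ell\le n-1$ we have $B_{p,n,\ell}=1$, and $(\star)$ is then the ordinary Kato inequality $|\nabla\omega|\ge\left|\nabla|\omega|\right|$. This is immediate from $\nabla_i f=\langle\nabla_i\omega,\omega\rangle/f$ and the Cauchy--Schwarz inequality $\langle\nabla_i\omega,\omega\rangle^2\le|\nabla_i\omega|^2|\omega|^2$; no structure equation is needed.

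The case $\ell=1$ (for every $p\ge2$) I would handle by a direct Hessian computation. Since $d\omega=0$, locally $\omega=du$, and the $p$-harmonicity $\delta(f^{p-2}\,du)=0$ means precisely that $u$ is $p$-harmonic; moreover $\nabla\omega=\Hess u$ is symmetric. Fix a point with $\nabla u\ne0$ and pick an orthonormal frame with $e_1=\nabla u/|\nabla u|$; writing $h_{ij}=u_{ij}$ one finds $\nabla_i f=h_{1i}$, hence $|\nabla f|^2=\sum_i h_{1i}^2$ and $|\nabla\omega|^2=\sum_{i,j}h_{ij}^2$. Expanding the $p$-harmonic equation
$$\Delta u=-(p-2)f^{-2}\Hess u(\nabla u,\nabla u)$$
in this frame yields the trace relation $(p-1)h_{11}+\sum_{i\ge2}h_{ii}=0$. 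Cauchy--Schwarz on the $n-1$ diagonal entries $h_{ii}$, $i\ge2$, then gives $\sum_{i,j\ge2}h_{ij}^2\ge(p-1)^2 h_{11}^2/(n-1)$. Combining this with the obvious splitting of $\sum_{i,j}h_{ij}^2$ and discarding the nonnegative off-diagonal terms (legitimate because $B_{p,n,1}\le2$) produces $(\star)$ with $B_{p,n,1}=1+\min\{1,(p-1)^2/(n-1)\}$. Note that this also recovers the harmonic case $p=2,\ \ell=1$, where the constant is $1+1/(n-1)=1+1/C_{n,1}$.

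The remaining and hardest regime is $p=2$ with $\ell>1$, in which $\omega$ is a genuine harmonic $\ell$-form ($d\omega=\delta\omega=0$) and $(\star)$ becomes the classical refined Kato inequality $|\nabla\omega|^2\ge(1+1/C_{n,\ell})\left|\nabla|\omega|\right|^2$. Here the essential tool is the pointwise orthogonal decomposition
$$T^*M\otimes\Lambda^\ell=\operatorname{Im}\theta_1\ \oplus\ \operatorname{Im}\theta_2\ \oplus\ W,$$
in which $\theta_1,\theta_2$ are the isometric embeddings of $\Lambda^{\ell+1}$ and $\Lambda^{\ell-1}$ from the statement, whose images carry respectively the exterior-derivative and codifferential parts of $\nabla\omega$, while $W$ is the remaining Cartan summand. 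The conditions $d\omega=0$ and $\delta\omega=0$ force $\nabla\omega\in W$, so it suffices to bound, for $\xi\in W$, the $1$-form obtained by contracting the $\Lambda^\ell$-slot of $\xi$ against $\omega/|\omega|$ (this contraction sends $\nabla\omega$ to $\nabla|\omega|$); concretely $B_{2,n,\ell}$ is the reciprocal of the square of the operator norm of this contraction restricted to $W$. Computing that norm is the main obstacle: it is the representation-theoretic core of the refined Kato inequalities, and I would invoke the eigenvalue computation of Calderbank--Gauduchon--Herzlich as presented in \cite{Calderbank2000, CiboZhu2012}, which gives norm squared $C_{n,\ell}/(C_{n,\ell}+1)$ and hence $B_{2,n,\ell}=1+1/C_{n,\ell}$. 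Finally, equality in $(\star)$ forces $\xi=\nabla\omega$ to be the extremal vector of $W$ on which the contraction attains its norm; taking $\alpha=\nabla\log|\omega|$ and projecting $\alpha\otimes\omega$ onto $W$ (that is, subtracting its $\operatorname{Im}\theta_1$ and $\operatorname{Im}\theta_2$ components) identifies this extremal vector, producing exactly the stated equality
$$\nabla\omega=\alpha\otimes\omega-\frac{1}{\sqrt{\ell+1}}\theta_1(\alpha\wedge\omega)+\frac{1}{\sqrt{n+1-\ell}}\theta_2(i_\alpha\omega).$$
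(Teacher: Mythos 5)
Your proposal is correct, and it is genuinely more self-contained than the paper's own treatment, which proves Lemma \ref{kt-ine} entirely by citation: \cite{Li:geometricanalysis} for $p=2,\ \ell=1$, \cite{DS} for $p>2,\ \ell=1$, \cite{Calderbank2000} (via \cite{CiboZhu2012}) for $p=2,\ \ell>1$, and ``standard'' for $p>2,\ \ell>1$. Your unifying reduction is the main added value: the exact identity $|\nabla(f^{p-2}\omega)|^2=p(p-2)f^{2p-4}|\nabla f|^2+f^{2p-4}|\nabla\omega|^2$ on $\{\omega\neq0\}$, combined with $(p-1)^2-p(p-2)=1$, shows that \eqref{kato} is equivalent to $|\nabla\omega|^2\ge\bigl(1+(p-1)^2(A_{p,n,\ell}-1)\bigr)\bigl|\nabla|\omega|\bigr|^2$, which makes explicit the role of the quantity $(p-1)^2(A_{p,n,\ell}-1)$ that the paper tabulates in Section 2 and feeds into \eqref{duti1}. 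Your frame computation for $\ell=1$ --- taking $e_1=\nabla u/|\nabla u|$, deriving the trace relation $(p-1)h_{11}+\sum_{i\ge2}h_{ii}=0$ from $p$-harmonicity, applying Cauchy--Schwarz to the diagonal block, and using $B_{p,n,1}\le 2$ to absorb the mixed terms $2\sum_{i\ge2}h_{1i}^2$ --- is precisely the argument of \cite{DS} and is complete as written (it also covers $p=2$), granted the standard fact that a $p$-harmonic function is smooth off $\{\nabla u=0\}$; the regime $p>2,\ \ell>1$ is indeed plain Cauchy--Schwarz. In the only hard regime $p=2,\ \ell>1$ you import, exactly as the paper does, the representation-theoretic norm computation of Calderbank--Gauduchon--Herzlich; note, though, that the inequality direction needs only the elementary estimate $|\Pi_W(\alpha\otimes\omega_0)|^2=1-\tfrac{|\alpha\wedge\omega_0|^2}{\ell+1}-\tfrac{|i_\alpha\omega_0|^2}{n-\ell+1}\le\tfrac{C_{n,\ell}}{C_{n,\ell}+1}$ for unit $\alpha,\omega_0$ together with the duality $\bigl|\nabla|\omega|\bigr|=\sup_{|\alpha|=1}\langle\nabla\omega,\Pi_W(\alpha\otimes\omega/|\omega|)\rangle$ (valid since $d\omega=\delta\omega=0$ puts $\nabla\omega$ in $W$), so even there you are closer to self-contained than you suggest. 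The one soft spot is the equality clause: you argue only that equality forces $\nabla\omega=\Pi_W(\alpha\otimes\omega)$ with $\alpha=\nabla\log|\omega|$, which is the displayed formula, but the lemma asserts an ``if and only if,'' so a complete write-up should either verify by direct substitution that this ansatz attains equality, or cite \cite{Calderbank2000,CiboZhu2012} for the identification of the extremal vectors --- which is all the paper itself does.
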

\begin{proof}The inequality \eqref{kato} is well-known when $\ell=1, p=2$, for example, see \cite{Li:geometricanalysis}.
When $\ell=1, p>2$, the inequality \eqref{kato} was proved by Seo and
the first author in \cite{DS}. Note that when $p=2, \ell>1$, the
inequality \eqref{kato} was proved by Calderbank et al. (see
\cite{Calderbank2000}) but we refer to \cite{CiboZhu2012} in a way
convenient for our purpose without introducing abstract notation.

Finally, when $p>2, \ell>1$, the inequality \eqref{kato} is standard
(see \cite{Calderbank2000}).
\end{proof}
Note that if $M$ is not complete, then \eqref{kato} is still true
provided that $\omega$ is a harmonic field, see \cite{CiboZhu2012}
for further discussion.

Suppose that $M$ is a complete noncompact Riemannian manifold. Let
$\left\{e_1, \ldots, e_n\right\}$ be a local orthonormal frame on
$M$ with dual coframe $\left\{\omega_1, \ldots, \omega_n\right\}$.
Given an $\ell$-form $\omega$ on $M$, the Weitzenb\"{o}ck curvature
operator $K_\ell$ acting on $\omega$ is defined by
$$ K_\ell=\sum\limits_{j.k=1}^{n}\omega_{k}\wedge i_{e_j}R(e_k, e_j)\omega. $$
Using the Weitzenb\"{o}ck curvature operator, we have the following
Bochner type formula for $\ell$-forms.
\begin{lemma}[\cite{Li:geometricanalysis}]\label{bochner}
Let $\omega=\sum_{I}a_I\omega_I$ be a $\ell$-form on $M$. Then
$$\begin{aligned}
 \Delta|\omega|^{2}
&=2\left\langle\Delta\omega, \omega\right\rangle+2|\nabla\omega|^{2}+2\left\langle E(\omega), \omega\right\rangle\\
&=2\left\langle\Delta\omega,
\omega\right\rangle+2|\nabla\omega|^{2}+2K_{\ell}(\omega, \omega)
\end{aligned}$$
where $E(\omega)=\sum\limits_{j, k=1}^{n}\omega_{k}\wedge
i_{e_j}R(e_k, e_j)\omega$.
\end{lemma}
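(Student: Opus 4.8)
The plan is to prove the identity pointwise. Fix $x\in M$ and choose the local orthonormal frame $\{e_1,\dots,e_n\}$ to be geodesic at $x$, so that $\nabla_{e_i}e_j(x)=0$ and hence $\nabla_{e_i}\omega_j(x)=0$; this annihilates all the frame (Christoffel) terms and reduces every covariant derivative at $x$ to a bare $\nabla_{e_k}$, which I abbreviate by $\nabla_k$. The computation then splits into a curvature-free part and a curvature-carrying part, to be combined at the end.

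\emph{Rough part.} Because the induced inner product on $\Lambda^\ell(M)$ is parallel, I differentiate $|\omega|^2=\langle\omega,\omega\rangle$ twice: at $x$ one has $\nabla_k\nabla_k\langle\omega,\omega\rangle=2\langle\nabla_k\nabla_k\omega,\omega\rangle+2\langle\nabla_k\omega,\nabla_k\omega\rangle$. Summing over $k$ and using that the operator $\Delta$ coincides with the connection (rough) Laplacian $\bar\Delta:=\sum_k\nabla_k\nabla_k$ when applied to the scalar function $|\omega|^2$, I obtain
$$\tfrac12\,\Delta|\omega|^2=\langle\bar\Delta\omega,\omega\rangle+|\nabla\omega|^2.$$
It therefore remains to convert $\bar\Delta\omega$ into $\Delta\omega$ plus a curvature term, i.e. to establish the Weitzenb\"ock identity $\bar\Delta\omega=\Delta\omega+E(\omega)$.

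\emph{Curvature part.} Here I use the local expressions $d\omega=\sum_k\omega_k\wedge\nabla_k\omega$ and $\delta\omega=-\sum_k i_{e_k}\nabla_k\omega$, insert them into $\Delta=-(\delta d+d\delta)$, and differentiate at $x$ (where the frame terms drop). Both $\delta d\omega$ and $d\delta\omega$ then expand into double sums of terms of the form $\omega_k\wedge i_{e_j}\nabla\nabla\omega$ carrying two covariant derivatives. The contraction identity $i_{e_j}(\omega_k\wedge\eta)=\delta_{jk}\,\eta-\omega_k\wedge i_{e_j}\eta$ splits off from $\delta d\omega$ exactly the term $-\sum_k\nabla_k\nabla_k\omega=-\bar\Delta\omega$, while the surviving double sums, after a single relabelling of indices, collapse to $\sum_{j,k}\omega_k\wedge i_{e_j}(\nabla_j\nabla_k-\nabla_k\nabla_j)\omega$. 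Since the frame is geodesic at $x$ one has $[e_j,e_k](x)=0$, so this commutator of second covariant derivatives is precisely the curvature operator $R$ acting on $\Lambda^\ell(M)$, and the sum assembles into the Weitzenb\"ock operator $E(\omega)=\sum_{j,k}\omega_k\wedge i_{e_j}R(e_k,e_j)\omega$. This yields $\bar\Delta\omega=\Delta\omega+E(\omega)$, and substituting into the rough formula gives the first displayed line. The second line is then a matter of notation: $E$ is by definition the same expression as the Weitzenb\"ock curvature operator $K_\ell$, so $\langle E(\omega),\omega\rangle=K_\ell(\omega,\omega)$.

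The genuinely delicate step is the curvature part: one must track the order of the two covariant derivatives through the index relabelling so that the symmetric second-derivative contributions cancel and only the antisymmetric commutator survives, and then match its sign and index placement against the curvature and codifferential conventions fixed in the cited reference. Since the sign in front of $E(\omega)$ is entirely determined by those conventions, I would pin it down on the test case $\ell=1$, where $E(\omega)$ reduces to the Ricci contribution $\Ric(\omega^\sharp,\omega^\sharp)$ and the classical Bochner formula for $1$-forms serves as a consistency check on the bookkeeping.
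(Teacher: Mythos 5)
The paper contains no proof of this lemma at all: it is quoted directly from Li's \emph{Geometric Analysis} \cite{Li:geometricanalysis}, so there is no in-paper argument to compare against; what you have written is essentially the standard textbook derivation, and it is correct. Your rough part is fine: in a frame geodesic at $x$ one gets $\tfrac12\Delta|\omega|^2=\langle\bar\Delta\omega,\omega\rangle+|\nabla\omega|^2$ with $\bar\Delta=\sum_k\nabla_k\nabla_k$, and this use of $\Delta$ on the scalar $|\omega|^2$ is legitimate precisely because the paper's sign convention $\Delta=-(\delta d+d\delta)$ makes the Hodge Laplacian agree with the connection Laplacian on functions. Your curvature part is also sound: inserting $d\omega=\sum_k\omega_k\wedge\nabla_k\omega$ and $\delta\omega=-\sum_k i_{e_k}\nabla_k\omega$ and using $i_{e_j}(\omega_k\wedge\eta)=\delta_{jk}\eta-\omega_k\wedge i_{e_j}\eta$ does split off $-\bar\Delta\omega$ from $\delta d\omega$, and the residual double sums collapse exactly to $\sum_{j,k}\omega_k\wedge i_{e_j}\left(\nabla_j\nabla_k-\nabla_k\nabla_j\right)\omega$ as you claim. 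The one genuinely delicate point is the one you flag yourself: whether this commutator is recorded as $R(e_j,e_k)$ or $R(e_k,e_j)$ (equivalently, whether curvature acts on $\ell$-forms by the natural commutator or by the dual of its action on vectors) flips the sign of $E(\omega)$, and the identity you need is $\bar\Delta\omega=\Delta\omega+E(\omega)$ with $\langle E(\omega),\omega\rangle=\Ric(\omega^\sharp,\omega^\sharp)$ when $\ell=1$. Your proposed calibration against the classical Bochner formula is exactly the right resolution, since the lemma must reduce for $\omega=df$, $f$ harmonic, to $\tfrac12\Delta|\nabla f|^2=|\Hess f|^2+\Ric(\nabla f,\nabla f)$, which pins the sign uniquely. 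Finally, your closing remark is accurate: the paper defines $K_\ell$ by the identical expression as $E$, so the second displayed line is indeed pure notation, $K_\ell(\omega,\omega)=\langle E(\omega),\omega\rangle$.
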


In order to estimate the $K_\ell$, we need to define a new curvature
which is appear naturally as a component of the Weitzenb\"{o}ck
curvature operator (see \cite{Lin15a, Lin15c}).
\begin{define}{\rm
Let $M^n$ be a complete immersed submanifold in a Riemannian
manifold $N^{n+m}$ with flat normal bundle. Here the submanifold $M$ is said to have flat normal bundle if the normal connection of $M$ is flat, namely
the components of the normal curvature tensor of $M$ are zero. For any point $x\in N^{n+m}$, choose an
orthonormal frame $\left\{e_i, \ldots, e_n\right\}_{i=1}^{n+m}$ of the tangent
space $T_xN$ and define
$$ \overline{R}^{(\ell, n-\ell)}([e_{i_1}\ldots, e_{i_n}])=\sum\limits_{k=1}^{\ell}\sum\limits_{h=\ell+1}^{n}\overline{R}_{i_ki_hi_ki_h} $$
for $1\leq \ell\leq n-1$, where the indices $1\leq {i_1}, \ldots
i_n\leq n+m$ are distinct with each other. We call
$\overline{R}^{(\ell, n-\ell)}([e_{i_1}\ldots, e_{i_n}])$ the $(\ell,
n-\ell)$-curvature of $N^{n+m}$. }
\end{define}
It is easy to see that if $\ell=1$ then the $(1, n-1)$-curvature becomes $(n-1)$-th Ricci curvature (\cite{Lin15c}). 
Assume that $N$ has pure curvature tensor, namely for every $p\in N$ there is an orthonormal basis $\{e_1, \ldots, e_n\}$ of the tangent space $T_pN$ such that $R_{ijrs}:= \left\langle R(e_i, e_j)e_r, e_s\right\rangle = 0$ whenever the set $\{i,j,r,s\}$ contains more than two elements. Here $R_{ijrs}$ denote the curvature tensors of $N$. It is worth to notice that
all 3-manifolds and conformally flat manifolds have pure curvature tensor. It was proved in \cite{Lin15a} that
$$ K_\ell(\omega, \omega)\geq \frac{1}{2}(n^{2}|H|^{2}-C_{n, \ell}|A|^{2})+\inf\limits_{i_1,\ldots, i_n}\overline{R}^{(\ell, n-\ell)}([e_{i_1}\ldots, e_{i_n}])|\omega|^{2}.$$

Finally, to prove the vanishing property of $p$-harmonic $\ell$-form, we use the following useful estimates. Since we can not find them in the literature, we will give a detail of discussion.
\begin{lemma}\label{L32}
For any closed $\ell$-form $\omega$ and
$\varphi\in\mathcal{C}^\infty(M)$, we have
$$|d(\varphi\omega)|=|d\varphi\wedge \omega|\leq |d\varphi|\cdot|\omega|.$$
\end{lemma}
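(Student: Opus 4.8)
The plan is to split the statement into two parts: the equality $|d(\varphi\omega)| = |d\varphi \wedge \omega|$, which is where the closedness hypothesis on $\omega$ enters, and the inequality $|d\varphi \wedge \omega| \leq |d\varphi| \cdot |\omega|$, which is a purely pointwise fact about the exterior algebra on $T_x^*M$ and holds for an arbitrary $1$-form in place of $d\varphi$. For the equality I would simply invoke the Leibniz rule for the exterior derivative,
\[
d(\varphi\omega) = d\varphi \wedge \omega + \varphi\, d\omega,
\]
and use $d\omega = 0$ to conclude $d(\varphi\omega) = d\varphi \wedge \omega$ as differential forms; taking pointwise norms yields the first equality.

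For the inequality I would fix a point $x \in M$ and work in $\Lambda^{*}T_x^*M$. If $d\varphi(x) = 0$ the inequality is trivial, so I may assume $d\varphi(x) \neq 0$ and choose an orthonormal coframe $\{\omega_1, \ldots, \omega_n\}$ of $T_x^*M$ adapted to $d\varphi$, that is, with $d\varphi(x) = |d\varphi|\,\omega_1$. I would then decompose $\omega$ at $x$ according to whether the index $1$ occurs, writing
\[
\omega = \omega_1 \wedge \beta + \gamma,
\]
where $\beta \in \Lambda^{\ell-1}$ and $\gamma \in \Lambda^{\ell}$ are assembled only from $\omega_2, \ldots, \omega_n$. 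Since $\omega_1 \wedge \omega_1 = 0$, this gives $d\varphi \wedge \omega = |d\varphi|\,\omega_1 \wedge \gamma$.

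The one computation that needs care is the norm identity $|\omega_1 \wedge \gamma| = |\gamma|$: writing $\gamma = \sum_J b_J \omega_J$ over increasing multi-indices $J$ of length $\ell$ not containing $1$, the $(\ell+1)$-forms $\omega_1 \wedge \omega_J$ are again orthonormal and pairwise distinct, so $|\omega_1 \wedge \gamma|^2 = \sum_J b_J^2 = |\gamma|^2$. Finally, because $\omega_1 \wedge \beta$ involves the index $1$ while $\gamma$ does not, the two are orthogonal, so $|\omega|^2 = |\beta|^2 + |\gamma|^2 \geq |\gamma|^2$. Combining these gives $|d\varphi \wedge \omega| = |d\varphi|\,|\gamma| \leq |d\varphi|\,|\omega|$, as claimed.

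Since the statement reduces to elementary exterior algebra once the closedness is used for the equality, there is no genuine obstacle here; the only point requiring attention is the orthonormal bookkeeping in the decomposition, in particular checking that wedging the index-$1$-free part $\gamma$ with $\omega_1$ preserves its norm and that $\gamma$ and $\omega_1 \wedge \beta$ sit in orthogonal subspaces of $\Lambda^{\ell}T_x^*M$.
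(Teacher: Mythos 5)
Your proof is correct, and for the inequality it takes a genuinely different route from the paper's. Both arguments handle the equality identically, via the Leibniz rule and $d\omega=0$. For the pointwise bound $|d\varphi\wedge\omega|\leq|d\varphi|\,|\omega|$, the paper expands $\omega$ in an arbitrary orthonormal coframe as $\sum_{K}\sum_j\omega_{jK}\,dx_j\wedge dx^K$, computes the components of $d\varphi\wedge\omega$ as antisymmetrized products $\varphi_i\omega_{jK}-\varphi_j\omega_{iK}$, and then invokes the Lagrange identity $\sum_{i<j}(a_ib_j-a_jb_i)^2+\bigl(\sum_i a_ib_i\bigr)^2=\bigl(\sum_i a_i^2\bigr)\bigl(\sum_i b_i^2\bigr)$ to discard the $\bigl(\sum a_ib_i\bigr)^2$ term and conclude. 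You instead adapt the coframe to $d\varphi$, write $\omega=\omega_1\wedge\beta+\gamma$ with $\beta,\gamma$ free of the index $1$, and reduce everything to the two elementary facts that $|\omega_1\wedge\gamma|=|\gamma|$ and that the two summands are orthogonal. Your version is cleaner in two respects: it sidesteps the multi-index bookkeeping (where the paper's displayed computation in fact contains a typo, $\sum_j\omega_{jK}$ in place of $\sum_j\omega_{jK}^2$, and some care is needed about ordered versus unordered index sets), and it makes the equality case transparent, namely $\beta=0$, i.e.\ $i_{\nabla\varphi}\omega=0$. The paper's approach has the mild advantage of working in any coframe and of exhibiting the exact defect term $\bigl(\sum_i\varphi_i\omega_{iK}\bigr)^2$, which is essentially $|i_{\nabla\varphi}\omega|^2$, so the two proofs are quantitatively the same identity seen from different bases. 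Since the lemma only needs the inequality, either argument suffices.
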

\begin{proof}Let $\{X_i\}$ be a local orthonormal frame and $\{dx_i\}$ is the dual coframe. Since $\omega$ is closed, we have $d(\varphi\omega)=d\varphi\wedge \omega$. Suppose that
$$\omega=\sum\limits_{|I|=\ell}\omega_Idx^I=\sum\limits_{|K|=\ell-1}\sum\limits_{j=1}^n\omega_{jK}dx_j\wedge dx^K,$$
where $\omega_{jK}=0$ if $j\in K$. Therefore, denote
$\varphi_i=\nabla_{X_i}\varphi$, we have
$$d\varphi\wedge\omega=\sum\limits_{|K|=\ell-1}\sum\limits_{i\not=j. i,j\not\in K}^n\varphi_i\omega_{jK}dx_i\wedge dx_j\wedge dx^K.$$
Observe that, for any $a_i, b_i\in\mathbb{R}, i=\overline{1,n}$
$$\sum\limits_{i<j}(a_ib_j-a_jb_i)^2+\left(\sum\limits_{i=1}^na_ib_i\right)^2=\left(\sum\limits_{i=1}^na_i^2\right)\left(\sum\limits_{i=1}^nb_i^2\right),$$
we infer
$$\begin{aligned}
|d\varphi\wedge\omega|^2
&=\sum\limits_{|K|=\ell-1}\sum\limits_{i<j, i,j\not\in K}(\varphi_i\omega_{jK}-\varphi_j\omega_{iK})^2\\
&\leq \sum\limits_{|K|=\ell-1}\sum\limits_{i<j}(\varphi_i\omega_{jK}-\varphi_j\omega_{iK})^2\\
&\leq
\sum\limits_{|K|=\ell-1}\left(\sum\limits_{i=1}^n\varphi_i^2\right)\left(\sum\limits_{j=1}^n\omega_{jK}\right)=|d\varphi|^2|\omega|^2.
\end{aligned}$$
The proof is complete.
\end{proof}
\section{$p$-harmonic $\ell$-forms in submanifolds of $N^{n+m}$ with flat normal bundle}
\setcounter{equation}{0}
\begin{theorem}\label{lin-thm}
Let $M^n\ (n \geq 3)$ be a complete non-compact immersed submanifold
of $N^{n+m}$ with flat normal bundle. Assume that $N$ has pure curvature tensor and the
$(\ell,n-\ell)$-curvature of $N^{n+m}$ is not less than $-k$ for $1 \leq
\ell \leq n-1$. If one of the following conditions
\begin{itemize}
\item[1.]
$$
|A|^2 \leq \dfrac{n^2 |H|^2 - 2k}{C_{n, \ell}}, \quad Vol(M)=\infty.
$$
\item[2.]
$$
\dfrac{n^2 |H|^2 - 2k}{C_{n, \ell}} < |A|^2 \leq \dfrac{n^2
|H|^2}{C_{n, \ell}} \text{ and }
 \lambda_1 > \dfrac{kQ^2}{4(Q-1+(p-1)^2(A_{p,n,\ell}-1))},
$$
\item[3.]
the total curvature $\|A\|_n$ is bounded by
$$
\|A\|_n^2 < \min \left\{ \dfrac{n^2}{C_{n, \ell}C_S},\
\dfrac{2}{C_{n, \ell}C_S}\left[ \dfrac{4(Q-1+(p-1)^2(A_{p,n,\ell}-1))}{Q^2} - \dfrac{k}{\lambda_1(M)} \right]
\right\},
$$
\item[4.] $\sup_M |A|$ is bounded and the fundamental tone satisfies
 $$
\lambda_1(M) > \dfrac{Q^2(2k + C_{n, \ell} \sup_M |A|^2)}{8(Q-1+(p-1)^2(A_{p,n,\ell}-1))},
 $$
\end{itemize}
holds true then every $p$-harmonic $\ell$-form with finite $L^Q, (Q\geq2)$ on $M$ is
trivial.
\end{theorem}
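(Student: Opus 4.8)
The plan is to combine the four tools assembled in Section~2 into a single Bochner-type integral estimate and then to split into the four geometric regimes. The engine is the Bochner formula of Lemma~\ref{bochner}, into which I would insert the Weitzenböck lower bound for submanifolds with pure ambient curvature and flat normal bundle (the displayed estimate just before Lemma~\ref{L32}); using the hypothesis $\overline{R}^{(\ell,n-\ell)}\ge -k$ this gives $K_\ell(\omega,\omega)\ge\bigl[\tfrac12(n^2|H|^2-C_{n,\ell}|A|^2)-k\bigr]|\omega|^2$. The refined Kato inequality (Lemma~\ref{kt-ine}) supplies the crucial extra gain $A_{p,n,\ell}-1$, and then either the Sobolev inequality (Lemma~\ref{sb-ie}) or the spectral lower bound on $\lambda_1(M)$ closes the argument.

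First I would produce a master inequality. Writing $f=|\omega|$ and exploiting $d\omega=0$ together with $\delta(f^{p-2}\omega)=0$, I would integrate the Bochner identity attached to the co-closed form $f^{p-2}\omega$ against a weight $f^{Q-2(p-1)}\phi^2$ with $\phi\in C_0^\infty(M)$, integrating by parts so that the Hodge-Laplacian term is rewritten via the co-closedness of $f^{p-2}\omega$ without spoiling the sign. Applying Lemma~\ref{kt-ine} to the resulting $|\nabla(f^{p-2}\omega)|^2$ and passing to the variable $u:=|\omega|^{Q/2}$ (so that $\int_M u^2=\int_M|\omega|^Q<\infty$ is exactly the finite $L^Q$ hypothesis, and Lemma~\ref{sb-ie} and the Poincaré inequality both apply directly to $u$), the estimate should take the form
$$\frac{4\bigl(Q - 1 + (p-1)^2(A_{p,n,\ell}-1)\bigr)}{Q^2}\int_M|\nabla u|^2\phi^2 + \int_M\Bigl[\tfrac12\bigl(n^2|H|^2 - C_{n,\ell}|A|^2\bigr) - k\Bigr]u^2\phi^2 \le C\int_M u^2|\nabla\phi|^2,$$
with $C$ depending only on $p$ and $Q$. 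The appearance of the sharp coefficient $Q-1+(p-1)^2(A_{p,n,\ell}-1)$ — the naive $Q-1$ coming from the integration by parts, upgraded by the Kato gain $(p-1)^2(A_{p,n,\ell}-1)$ — is the quantitative heart of the theorem.

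I would then dispose of the four cases. In Case~1 the curvature bracket is nonnegative by $|A|^2\le(n^2|H|^2-2k)/C_{n,\ell}$, so letting $\phi\uparrow 1$ (finiteness of $\int_M u^2$ forces $\int_M u^2|\nabla\phi|^2\to 0$ along standard exhaustions) yields $\nabla u\equiv 0$; then $u$ is constant, and $\mathrm{Vol}(M)=\infty$ with $\int_M u^2<\infty$ forces $\omega\equiv 0$. In Cases~2 and~4 I would move the possibly negative term to the right and absorb it via $\lambda_1(M)\int_M u^2\phi^2\le\int_M|\nabla(u\phi)|^2$: the bad contribution is $-ku^2$ (and in Case~4 also $-\tfrac{C_{n,\ell}}2|A|^2u^2\ge -\tfrac{C_{n,\ell}}2\sup_M|A|^2\,u^2$, dropping the nonnegative $\tfrac{n^2}2|H|^2$ term), and the stated lower bounds on $\lambda_1(M)$ are precisely the statements $\tfrac{4(Q-1+(p-1)^2(A_{p,n,\ell}-1))}{Q^2}-\tfrac{k}{\lambda_1(M)}>0$ (respectively with $k$ replaced by $k+\tfrac{C_{n,\ell}}2\sup_M|A|^2$), so the net coefficient of $\int_M|\nabla u|^2\phi^2$ stays strictly positive and $u\equiv 0$. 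In Case~3 I would instead control $\int_M|A|^2(u\phi)^2$ by Hölder and Lemma~\ref{sb-ie}, namely $\int_M|A|^2(u\phi)^2\le\|A\|_n^2 C_S\int_M(|\nabla(u\phi)|^2+|H|^2(u\phi)^2)$: the first bound $\|A\|_n^2<n^2/(C_{n,\ell}C_S)$ lets the good $\tfrac{n^2}2|H|^2$ term swallow the Sobolev $|H|^2$-cost, while the second bound keeps the residual gradient coefficient positive once the remaining $-k$ term is handled through $\lambda_1(M)$.

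The main obstacle I anticipate is the first step: establishing the master inequality with the sharp constant $Q-1+(p-1)^2(A_{p,n,\ell}-1)$. This requires (i) correctly converting $\langle\Delta\omega,\omega\rangle$ using the co-closedness of $f^{p-2}\omega$ so that no wrong-sign term survives, (ii) inserting the refined Kato inequality at exactly the point where its gain $A_{p,n,\ell}-1$ persists through the change of variables to $u=|\omega|^{Q/2}$, and (iii) a standard but careful cutoff argument that uses the finite $L^Q$ energy to kill $\int_M u^2|\nabla\phi|^2$. Once the constant is pinned down, the four geometric hypotheses are calibrated precisely so that the coefficient of $\int_M|\nabla u|^2\phi^2$ remains positive, and the vanishing conclusion follows in each case.
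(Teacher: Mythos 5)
Your proposal matches the paper's proof in all essentials: the Bochner formula applied to $|\omega|^{p-2}\omega$, the Weitzenb\"ock lower bound $K_\ell\ge\tfrac12(n^2|H|^2-C_{n,\ell}|A|^2)-k$, the refined Kato inequality producing exactly the coefficient $Q-1+(p-1)^2(A_{p,n,\ell}-1)$ after the integration by parts (including the $(p-2)q$ and $2(p-2)$ terms coming from $\langle d(|\omega|^{p-2}\omega),d(\varphi^2|\omega|^q\omega)\rangle$ via Lemma \ref{L32}), and the same four-case analysis via curvature positivity, the $\lambda_1$ bound, and the Hoffman--Spruck Sobolev inequality. The one detail you gloss over --- which the paper handles by working on $M_+=M\setminus\{\omega=0\}$ with a Duzaar--Fuchs-type cutoff $\eta_{\tilde\epsilon}=\min\{|\omega|/\tilde\epsilon,1\}$ --- is the lack of regularity of $|\omega|^{p-1}$ at the zero set of $\omega$, but this is a standard repair and does not affect the structure of your argument.
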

\begin{proof}
Let $M_+:=M\setminus\{x\in M, \omega(x)=0\}$. Let $\omega$ be any $p$-harmonic $\ell$-form with finite $L^Q$ norm. Apply the Bochner formula to the form $|\omega|^{p-2}\omega$
we obtain, on $M_+$
$$ \begin{aligned}
\frac{1}{2}\Delta|\omega|^{2(p-1)}
&=|\nabla(|\omega|^{p-2}\omega)|^{2}-\left\langle(\delta d+d\delta)(|\omega|^{p-2}\omega), |\omega|^{p-2}\omega\right\rangle +K_{\ell}(|\omega|^{p-2}\omega, |\omega|^{p-2}\omega)\\
&=|\nabla(|\omega|^{p-2}\omega)|^{2}-\left\langle\delta
d(|\omega|^{p-2}\omega), |\omega|^{p-2}\omega\right\rangle
+|\omega|^{2(p-2)}K_{\ell}(\omega, \omega)
\end{aligned} $$
where we used $\omega$ is $p$-harmonic in the second equality. This
can be read as
$$ |\omega|^{p-1}\Delta|\omega|^{p-1}=\left(|\nabla(|\omega|^{p-2}\omega)|^{2}-|\nabla|\omega|^{p-1}|^{2}\right)-|\omega|^{p-2}\left\langle\delta d(|\omega|^{p-2}\omega), \omega\right\rangle +|\omega|^{2(p-2)}K_{\ell}(\omega, \omega). $$
By Kato type inequality, we infer
\begin{equation}\label{duti1}
|\omega| \Delta |\omega|^{p-1} \geq
(p-1)^{2}(A_{p,n,\ell}-1)|\omega|^{p-2}|\nabla|\omega||^{2}-
\left\langle \delta d (|\omega|^{p-2}\omega), \omega \right\rangle +
K_{\ell} |\omega|^p.
\end{equation}
Hence,
\begin{equation*}
|\omega|^{q+1} \Delta |\omega|^{p-1} \geq
(p-1)^{2}(A_{p,n,\ell}-1)|\omega|^{p+q-2}|\nabla|\omega||^{2}-
\left\langle \delta d (|\omega|^{p-2}\omega), |\omega|^q \omega
\right\rangle + K_{\ell} |\omega|^{p+q}.
\end{equation*}
We choose cutoff function $\varphi \in \mathcal
C^{\infty}_0(M_+)$ then
multiplying both sides of the above inequality by $\varphi^2$, we
obtain
$$\begin{aligned}
\int_{M_+} \varphi^2 |\omega|^{q+1} \Delta |\omega|^{p-1} \geq
& (p-1)^{2}(A_{p,n,\ell}-1)\int_{M_+}\phi^{2}|\omega|^{p+q-2}|\nabla|\omega||^{2}\\
&- \int_{M_+} \left\langle \delta d (|\omega|^{p-2}\omega),
\varphi^2|\omega|^q \omega \right\rangle + \int_{M_+} K_{\ell} \varphi^2
|\omega|^{p+q}.
\end{aligned}$$
by integrating by part, this implies
\begin{align}
\int_{M_+} \left\langle \nabla(\varphi^2 |\omega|^{q+1}), \nabla
|\omega|^{p-1}\right\rangle
\leq &-(p-1)^{2}(A_{p,n,\ell}-1)\int_{M_+}\phi^{2}|\omega|^{p+q-2}|\nabla|\omega||^{2}\notag\\
&+ \int_{M_+} \left\langle d (|\omega|^{p-2}\omega),
d(\varphi^2|\omega|^q \omega) \right\rangle -\int_{M_+} K_{\ell} \varphi^2
|\omega|^{p+q}. \label{m-f}
\end{align}
Since the $({\ell, n-\ell})$-curvature of $N^{n+m}$ is not less than $-k$,
we have
$$
K_{\ell} \geq \dfrac{1}{2} (n^2 |H|^2 - \max\{\ell, n-\ell\}|A|^2) - k.
$$
Obviously,
\begin{align}
\int_{M_+} \left\langle \nabla(\varphi^2 |\omega|^{q+1}), \nabla
|\omega|^{p-1}\right\rangle
=& 2(p-1) \int_{M_+} \varphi |\omega|^{p+q-1}\left\langle \nabla \varphi, \nabla |\omega| \right\rangle \nonumber \\
& + (q+1)(p-1) \int_{M_+} \varphi^2 |\omega|^{p+q-2}|\nabla |\omega||^2.
\label{ls-f}
\end{align}
Note that for any close $\ell$-form $\omega$ and smooth function $f$, it is
proved in Lemma \ref{L32} that
$$
d(f \wedge \omega)=df\wedge\omega \leq |df|.|\omega|.
$$
Therefore,
\begin{align}
\int_{M_+} \left\langle d (|\omega|^{p-2}\omega), d(\varphi^2|\omega|^q \omega) \right\rangle  =& \int_{M_+} \left\langle d(|\omega|^{p-2}) \wedge \omega, d(\varphi^2|\omega|^q) \wedge \omega \right\rangle \nonumber \\
\leq &\int_{M_+} \left|d(|\omega|^{p-2}) \wedge \omega|.|d(\varphi^2|\omega|^q) \wedge \omega \right| \nonumber \\
\leq &\int_{M_+} \left| \nabla(|\omega|^{p-2})\right||\omega|.\left|\nabla(\varphi^2|\omega|^q) \right| |\omega| \nonumber \\
\leq &(p-2)q \int_{M_+} \varphi^2 |\omega|^{p+q-2}\left| \nabla |\omega| \right|^2 \nonumber \\
& + 2(p-2) \int_{M_+} \varphi |\omega|^{p+q-1}\left| \nabla|\omega|
\right| |\nabla \varphi|. \label{d-f}
\end{align}

$1.$ If $|A|^2 \leq \dfrac{n^2 |H|^2 - 2k}{C_{n, \ell}}$, then $K_{\ell} \geq
0$. Therefore, from \eqref{m-f} we obtain
$$\begin{aligned}
\int_{M_+} \left\langle \nabla(\varphi^2 |\omega|^{q+1}), \nabla
|\omega|^{p-1}\right\rangle \leq
& -(p-1)^{2}(A_{p,n,\ell}-1)\int_{M_+}\varphi^2|\omega|^{p+q-2}|\nabla|\omega||^{2}\\
&+ \int_{M_+} \left\langle d (|\omega|^{p-2}\omega),
d(\varphi^2|\omega|^q \omega) \right\rangle.
\end{aligned}$$
Thus, by \eqref{ls-f}, \eqref{d-f},
\begin{align*}
&2(p-1) \int_{M_+} \varphi |\omega|^{p+q-1}\left\langle \nabla \varphi,
\nabla |\omega| \right\rangle
+ (q+1)(p-1) \int_{M_+} \varphi^2 |\omega|^{p+q-2}|\nabla |\omega||^2\\
\leq&-(p-1)^{2}(A_{p,n,\ell}-1)\int_{M_+}\varphi^2|\omega|^{p+q-2}|\nabla|\omega||^{2}\\
&+ (p-2)q \int_{M_+} \varphi^2 |\omega|^{p+q-2}\left| \nabla
|\omega| \right|^2
 + 2(p-2) \int_{M_+} \varphi |\omega|^{p+q-1}\left| \nabla|\omega| \right| |\nabla \varphi|.
\end{align*}
Hence,
\begin{align*}
(p+q-1+(p-1)^{2}(A_{p,n,\ell}-1)) \int_{M_+} \varphi^2
|\omega|^{p+q-2}|\nabla |\omega||^2 \leq  2(2p-3)\int_{M_+} \varphi
|\omega|^{p+q-1}\left| \nabla|\omega| \right| |\nabla \varphi|.
\end{align*}
Using the fundamental inequality $2AB \leq \varepsilon A^2 +
\varepsilon^{-1}B^2$, we have that, for every $\varepsilon > 0$,
\begin{equation}\label{y-ine}
2 \int_{M_+} \varphi |\omega|^{p+q-1} \left| \nabla |\omega| \right|
|\nabla \varphi|  \leq \varepsilon \int_{M_+} \varphi^2
|\omega|^{p+q-2}|\nabla |\omega||^2 + \dfrac{1}{\varepsilon} \int_{M_+}
|\omega|^{p+q} |\nabla \varphi|^2.
\end{equation}
From the last two inequalities, we obtain
$$
 (p+q-1+(p-1)^{2}(A_{p,n,\ell}-1) - \varepsilon(2p-3)) \int_{M_+} \varphi^2 |\omega|^{p+q-2}|\nabla |\omega||^2
 \leq  \dfrac{2p-3}{\varepsilon}\int_{M_+} |\omega|^{p+q} |\nabla \varphi|^2.
$$
Let $Q:=p+q$, since $Q-1+(p-1)^{2}(A_{p,n,\ell}-1) >0$, we can choose
$\varepsilon > 0$ small enough and a constant $K = K(\varepsilon)
>0$ so that
\begin{equation}\label{e212}
\dfrac{4}{Q^2} \int_{M_+} \varphi^2\left|\nabla
|\omega|^{Q/2}\right|^{2} \leq K\int_{M_+}
|\omega|^{Q}|\nabla\varphi|^2,\quad \text{ for all } r >0.
\end{equation}
Applying a variation of the Duzaar-Fuchs cut-off method (see also
\cite{Duzaar, Nakauchi}), we shall show that \eqref{e212} holds for
every $\varphi\in {C}_0^\infty(M)$. Indeed, we define
$$\eta_{\widetilde{\epsilon}}=\min\left\{\frac{|\omega|}{\widetilde{\epsilon}}, 1\right\}$$
for $\tilde{\epsilon}>0$. Let $\varphi_{\tilde{\epsilon}}=\psi^2\eta_{\tilde{\epsilon}}$, where $\psi\in\mathcal{C}_0^\infty(M)$.
 It is easy to see that $\varphi_{\tilde{\epsilon}}$ is a compactly supported continuous function and
 $\varphi_{\tilde{\epsilon}}=0$ on $M\setminus M_+$. Now, we replace $\varphi$ by $\varphi_{\tilde{\epsilon}}$ in \eqref{e212} and get
\begin{align}
\int_{M_+}&\psi^4(\eta_{\tilde{\epsilon}})^2|\omega|^{Q-2}|\nabla|\omega||^2\notag\\
&\leq
6C\int_{M_+}|\omega|^{Q}|\nabla\psi|^2\psi^2(\eta_{\tilde{\epsilon}})^2+3C\int_{M_+}|\omega|^{Q}|\nabla\eta_{\tilde{\epsilon}}|^2\psi^4.\label{ved1}
\end{align}
Observe that
$$
\int_{M_+}|\omega|^{Q}|\nabla\eta_{\tilde{\epsilon}}|^2\psi^4
\leq
\tilde{\epsilon}^{Q-2}\int_{M_+}|\nabla|\omega||^2\psi^4\chi_{\{|\omega|\leq\tilde{\epsilon}\}}
$$
and the right hand side vanishes by dominated convergence as
$\tilde{\epsilon}\to0$, because $|\nabla|\omega||\in L^2_{loc}(M)$ and $Q\geq2$.
Letting $\tilde{\epsilon}\to 0$ and applying Fatou lemma to the
integral on the left hand side and dominated convergence to the
first integral in the right hand side of \eqref{ved1}, we obtain
\begin{equation}\label{dee212}
\int_{M_+}\psi^4|\omega|^{Q-2}|\nabla|\omega||^2\leq
6C\int_{M_+}|\omega|^{Q}|\nabla\psi|^2\psi^2,
\end{equation}
where $\psi \in C^\infty_0 (M)$.
We choose cutoff functions $\psi \in \mathcal
C^{\infty}_0(M_+)$ 
satisfying
$$
\psi=
\begin{cases}
1, & \text{ on } B_r\\
\in [0,1] \text{ and } |\nabla \varphi| \leq \frac{2}{r}, & \text{ on } B_{2r}\setminus B_r\\
0, & \text{ on } M\setminus B_{2r}
\end{cases},
$$
where $B_r$ is the open ball of radius $r$ and center at a fixed point of $M$.

Letting $r \to \infty$, we conclude that $|\omega|$ is constant on $M_+$. Since $|\omega|=0 \in \partial M_+$, it implies that either $\omega$ is zero or $M_+=\emptyset$. If $M_+=\emptyset$, then $|\omega|$ is contant on $M$. Thanks to the assumption $|\omega|\in L^Q(M)$, we infer $\omega=0$.

$2.$ Assume that
$$
\dfrac{n^2 |H|^2 - 2k}{C_{n, \ell}} < |A|^2 \leq \dfrac{n^2
|H|^2}{C_{n, \ell}}
$$
then $ -k \leq  K_{\ell} <0$. From this and \eqref{m-f} we obtain
\begin{align}
\int_{M_+} \left\langle \nabla(\varphi^2 |\omega|^{q+1}), \nabla
|\omega|^{p-1}\right\rangle \leq
&-(p-1)^{2}(A_{p,n,\ell}-1)\int_{M_+}|\omega|^{p+q-2}|\nabla|\omega||^{2}\notag\\
& \int_{M_+} \left\langle d (|\omega|^{p-2}\omega),
d(\varphi^2|\omega|^q \omega) \right\rangle + k \int_{M_+} \varphi^2
|\omega|^{p+q}.\label{dd}
\end{align}
By the definition of $\lambda_1(M)$ and \eqref{y-ine},
we obtain that, for any $\varepsilon > 0$,
\begin{align}
\lambda_1 \int_{M_+} \varphi^2 |\omega|^{p+q} \leq &\int_{M_+} \left| \nabla \left(\varphi |\omega|^{(p+q)/2}\right) \right|^2 \nonumber \\
 =& \dfrac{(p+q)^2}{4}\int_{M_+} \varphi^2 |\omega|^{p+q-2} \left| \nabla |\omega| \right|^2 + \int_{M_+} |\omega|^{p+q} |\nabla \varphi|^2 \nonumber \\
&+ (p+q)\int_{M_+} \varphi |\omega|^{p+q-1} \left \langle \nabla |\omega|, \nabla \varphi \right \rangle \nonumber \\
& \leq (1+\varepsilon) \dfrac{(p+q)^2}{4} \int_{M_+} \varphi^2
|\omega|^{p+q-2} \left| \nabla |\omega| \right|^2 + \left( 1 +
\dfrac{1}{\varepsilon} \right) \int_{M_+} |\omega|^{p+q} |\nabla
\varphi|^2. \label{c-ie}
\end{align}
From this, \eqref{ls-f}, \eqref{d-f}, and \eqref{dd}, we have
$$
C_{\varepsilon} \int_{M_+} \varphi^2 |\omega|^{p+q-2}|\nabla |\omega||^2
\leq D_{\varepsilon} \int_{M_+} |\omega|^{p+q} |\nabla \varphi|^2,
$$
where
$$\begin{aligned}
C_{\varepsilon}
: &= p+q-1+(p-1)^2(A_{p,n,\ell}-1) - \varepsilon(2p-3)- (1+\varepsilon) \dfrac{k(p+q)^2}{4\lambda_1(M)}\notag\\
&=Q-1+(p-1)^2(A_{p,n,\ell}-1) - \varepsilon(2p-3)- (1+\varepsilon) \dfrac{kQ^2}{4\lambda_1(M)},
\end{aligned}$$
and
$$
D_{\varepsilon}: = \dfrac{2p-3}{\varepsilon} +
\dfrac{k}{\lambda_1(M)}\left( 1 + \dfrac{1}{\varepsilon} \right).
$$ 
Here $Q=p+q\geq2$. Since
 $$
  \lambda_1(M) > \dfrac{kQ^2}{4(Q-1+(p-1)^2(A_{p,n,\ell}-1))},
 $$
there are some small enough number $\varepsilon >0$ and constant $K
= K(\varepsilon)>0$ such that
$$
\dfrac{4}{Q^2} \int_{M_+} \left|\nabla
|\omega|^{Q/2}\right|^{2} \leq K\int_{M_+}
|\omega|^{Q}|\nabla\varphi|^2.
$$
Arguing similarly as in the proof of the first part, we conclude that $|\omega|$ is constant.
Since $\lambda_1(M)>0$, $M$ must have infinite volume, note that $|\omega| \in L^{Q}(M)$, we have that $\omega$ is zero.

$3.$ Assume that $ |A|^2 > \dfrac{n^2 |H|^2}{C_{n, \ell}}$. Then, from
\eqref{m-f} we have
\begin{align}
&\int_{M_+} \left\langle \nabla(\varphi^2 |\omega|^{q+1}), \nabla |\omega|^{p-1}\right\rangle + \dfrac{n^2}{2} \int_{M_+} \varphi^2 |H|^2 |\omega|^{p+q} \notag\\
\leq&-(p-1)^2(A_{p,n,\ell}-1)\int_{M_+}\varphi^2|\omega|^{p+q-2}|\nabla|\omega||^2\notag\\
&+ \int_{M_+} \left\langle d (|\omega|^{p-2}\omega),
d(\varphi^2|\omega|^q \omega) \right\rangle + \dfrac{C_{n, \ell}}{2}
\int_{M_+} \varphi^2 |A|^2 |\omega|^{p+q} + k \int_{M_+} \varphi^2
|\omega|^{p+q}.\label{m3-f}
\end{align}
By H\"older inequality and Lemma \ref{sb-ie}, we have
\begin{align*}
\int_{M_+} \varphi^2 |A|^2 |\omega|^{p+q} & \leq \|A\|_n^2 \left( \int_{M_+} \left(\varphi |\omega|^{(p+q)/2} \right)^{\frac{2n}{n-2}} \right)^{\frac{n-2}{n}} \\
& \leq C_S \|A\|_n^2 \left( \int_{M_+} \left| \nabla \left( \varphi
|\omega|^{(p+q)/2} \right) \right|^2  + \int_{M_+} \varphi^2 |H|^2
|\omega|^{p+q} \right),
\end{align*}
where $C_S$ is the Sobolev constant depending only on $n$. From the
last inequality and \eqref{y-ine} we can get that, for any
$\varepsilon > 0$,
\begin{align*}
\int_{M_+} \varphi^2 |A|^2 |\omega|^{p+q} & \leq C_S \|A\|_n^2 (1+\varepsilon) \dfrac{(p+q)^2}{4} \int_{M_+} \varphi^2 |\omega|^{p+q-2} \left| \nabla |\omega| \right|^2 \\
& + C_S \|A\|_n^2 \left( 1 + \dfrac{1}{\varepsilon} \right) \int_{M_+}
|\omega|^{p+q} |\nabla \varphi|^2 + C_S \|A\|_n^2 \int_{M_+} \varphi^2
|H|^2 |\omega|^{p+q}.
\end{align*}
Using this inequality and \eqref{ls-f}, \eqref{d-f}, \eqref{m3-f},
we have
$$
C_{\varepsilon} \int_{M_+} \varphi^2 |\omega|^{p+q-2}|\nabla |\omega||^2
+ \dfrac{1}{2}\left( n^2 - C_{n, \ell} C_S \|A\|_n^2 \right) \int_{M_+}
\varphi^2 |H|^2 |\omega|^{p+q} \leq D_{\varepsilon} \int_{M_+}
|\omega|^{p+q} |\nabla \varphi|^2,
$$
where, for $Q:=p+q\geq2$
$$\begin{aligned}
C_{\varepsilon}: 
&= p+q-1+(p-1)^2(A_{p,n,\ell}-1) - \varepsilon
(2p-3)- (1+\varepsilon)\dfrac{(p+q)^2}{4}\left(
\dfrac{k}{\lambda_1(M)} + \dfrac{C_{n, \ell}C_S\|A\|_n^2}{2}  \right)\\
&=Q-1+(p-1)^2(A_{p,n,\ell}-1) - \varepsilon
(2p-3)- (1+\varepsilon)\dfrac{Q^2}{4}\left(
\dfrac{k}{\lambda_1(M)} + \dfrac{C_{n, \ell}C_S\|A\|_n^2}{2}  \right),
\end{aligned}$$
and
$$
D_{\varepsilon}: = \dfrac{2p-3}{\varepsilon} + \left( 1 +
\dfrac{1}{\varepsilon} \right) \left( \dfrac{k}{\lambda_1(M)} +
\dfrac{C_{n, \ell}C_S\|A\|_n^2}{2}  \right).
$$
Since
 $$
 \|A\|_n^2 < \min \left\{ \dfrac{n^2}{C_{n, \ell}C_S},\ \dfrac{2}{C_{n, \ell}C_S}\left[ \dfrac{4(Q-1+(p-1)^2(A_{p,n,\ell}-1))}{Q^2} - \dfrac{k}{\lambda_1(M)} \right] \right\},
 $$
there are some small enough $\varepsilon >0$ and constant $K =
K(\varepsilon)>0$ such that
$$
\dfrac{4}{Q^2} \int_{M_+} \left|\nabla
|\omega|^{Q/2}\right|^{2}\varphi^2 \leq K\int_{M_+}
|\omega|^{Q}|\nabla\varphi|^2.
$$
Using the same argument as in the proof of the first and second part, this inequality implies that $\omega$ is zero.

$4.$ Suppose that $\sup_M |A|^2 < \infty$. Then using \eqref{c-ie}
we have that, for any $\varepsilon >0$,
\begin{align*}
\int_{M_+} \varphi^2 |A|^2 |\omega|^{p+q} &\leq \sup_M |A|^2 \int_{M_+} \varphi^2 |\omega|^{p+q} \leq \dfrac{\sup_M |A|^2}{\lambda_1(M)} \int_{M_+} \left| \nabla\left( \varphi |\omega|^{(p+q)/2}\right) \right|^2 \\
& \leq \dfrac{\sup_M |A|^2}{\lambda_1(M)} (1+\varepsilon) \dfrac{(p+q)^2}{4} \int_{M_+} \varphi^2 |\omega|^{p+q-2} \left| \nabla |\omega| \right|^2 \\
& + \dfrac{\sup_M |A|^2}{\lambda_1(M)}  \left( 1 +
\dfrac{1}{\varepsilon} \right) \int_{M_+} |\omega|^{p+q} |\nabla
\varphi|^2.
\end{align*}
From this and \eqref{ls-f}, \eqref{d-f}, \eqref{m3-f} we obtain
that, for any $\varepsilon>0$,
$$
C_{\varepsilon} \int_{M_+} \varphi^2 |\omega|^{p+q-2}|\nabla |\omega||^2
+ \dfrac{n^2}{2} \int_{M_+} \varphi^2 |H|^2 |\omega|^{p+q} \leq
D_{\varepsilon} \int_{M_+} |\omega|^{p+q} |\nabla \varphi|^2,
$$
where
$$\begin{aligned}
C_{\varepsilon}: 
&= p+q-1+(p-1)^2(A_{p,n,\ell}-1) - \varepsilon(2p-3)- (1+\varepsilon)\dfrac{(p+q)^2}{4\lambda_1(M)}\left( k +
\dfrac{C_{n, \ell} \sup_M |A|^2}{2} \right)\\
&=Q-1+(p-1)^2(A_{p,n,\ell}-1) - \varepsilon(2p-3)- (1+\varepsilon)\dfrac{Q^2}{4\lambda_1(M)}\left( k +
\dfrac{C_{n, \ell} \sup_M |A|^2}{2} \right),
\end{aligned}$$
and
$$
D_{\varepsilon}: = \dfrac{2p-3}{\varepsilon} + \left( 1 +
\dfrac{1}{\varepsilon} \right) \dfrac{1}{\lambda_1(M)}\left( k +
\dfrac{C_{n, \ell} \sup_M |A|^2}{2} \right).
$$
Since
 $$
 \lambda_1(M) > \dfrac{Q^2(2k + C_{n, \ell} \sup_M |A|^2)}{8(Q-1+(p-1)^2(A_{p,n,\ell}-1))},
 $$
$M$ must have infinite volume. Moreover, there are some small enough $\varepsilon >0$ and constant $K =
K(\varepsilon)>0$ such that
$$
\dfrac{4}{Q^2} \int_{M_+} \left|\nabla
|\omega|^{Q/2}\right|^{2}\varphi^2 \leq K\int_{M_+}
|\omega|^{Q}|\nabla\varphi|^2.
$$
Using the same argument as in the proof of the first and second part, this inequality implies that $\omega$ is zero.
\end{proof}
Now, we will give a geometric application of Theorem \ref{lin-thm}
that is the Theorem \ref{main1}. First, let us recall the following
result about the existence of $p$-harmonic function on a Riemannian
manifold.
\begin{theorem}[\cite{CCW}]\label{twoends}
Let $M$ be a Riemannian manifold with at least two $p$-nonparabolic
ends. Then, there exists a non-constant, bounded $p$-harmonic
function $u\in{\mathcal C^{1, \alpha}}(M)$ for some $\alpha>0$ such
that $|\nabla u|\in L^p(M)$.
\end{theorem}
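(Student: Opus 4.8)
The plan is to adapt the Li--Tam construction of a bounded harmonic function separating two ends to the nonlinear $p$-Laplacian setting. Since $M$ has at least two $p$-nonparabolic ends, I would first fix a smooth compact set $\Omega$ so that $M\setminus\Omega$ contains two distinct $p$-nonparabolic ends $E_1$ and $E_2$, and choose a smooth compact exhaustion $\Omega\subset\Omega_1\subset\Omega_2\subset\cdots$ with $\bigcup_i\Omega_i=M$. On each $\Omega_i$ I solve the Dirichlet problem $\Delta_p u_i=0$ in $\Omega_i$ with boundary data $u_i=1$ on $\partial\Omega_i\cap\overline{E_1}$ and $u_i=0$ on the remaining components of $\partial\Omega_i$. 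Existence and uniqueness of $u_i$ come from the direct method, since $u_i$ is the unique minimizer of the $p$-energy $\int_{\Omega_i}|\nabla v|^p$ over the affine class of $W^{1,p}(\Omega_i)$ functions with the prescribed trace (strict convexity and coercivity for $1<p<\infty$); the comparison principle for the $p$-Laplacian then yields $0\le u_i\le 1$ on $\Omega_i$.

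The next step is to pass to the limit. Because $0\le u_i\le 1$ uniformly, the interior $C^{1,\alpha}$ regularity theory for $p$-harmonic functions (DiBenedetto, Tolksdorf) supplies local gradient bounds independent of $i$ on each fixed compact subset of $M$. Hence $\{u_i\}$ is precompact in $C^1_{loc}(M)$, and I would extract a subsequence converging in $C^1_{loc}$ to a function $u\in\mathcal{C}^{1,\alpha}_{loc}(M)$ which is $p$-harmonic on all of $M$ and satisfies $0\le u\le 1$; in particular $u$ is bounded.

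To obtain $|\nabla u|\in L^p(M)$, I would bound the energies $\int_{\Omega_i}|\nabla u_i|^p$ uniformly in $i$. Since $u_i$ minimizes the $p$-energy subject to its boundary values, I test against a fixed Lipschitz competitor $\phi$ that equals $1$ on $E_1$ outside a compact set, equals $0$ on the other ends outside a compact set, and interpolates across $\Omega$, so that $\nabla\phi$ is supported in a fixed compact neighborhood of $\Omega$ and $\int_M|\nabla\phi|^p=C<\infty$. For large $i$ the trace of $\phi$ on $\partial\Omega_i$ matches that of $u_i$, whence $\int_{\Omega_i}|\nabla u_i|^p\le\int_{\Omega_i}|\nabla\phi|^p\le C$ with $C$ independent of $i$. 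By weak lower semicontinuity of the $p$-energy under the $C^1_{loc}$ (hence weak $W^{1,p}_{loc}$) convergence, $\int_M|\nabla u|^p\le\liminf_i\int_{\Omega_i}|\nabla u_i|^p\le C<\infty$, so $|\nabla u|\in L^p(M)$.

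The main obstacle is the non-constancy of $u$, and this is precisely where the $p$-nonparabolicity of the two ends is used. I would argue by contradiction: the $p$-nonparabolicity of $E_2$ (positive ${\rm cap}_p(K,E_2)$ in the sense of the Definition above) furnishes a nonconstant bounded $p$-harmonic capacity potential on $E_2$ equal to $1$ on $\partial\Omega\cap E_2$ and tending to $0$ at infinity along $E_2$, while on $E_1$ the boundary values drive $u$ toward $1$. Exploiting the symmetry that $1-u_i$ is again $p$-harmonic (since $|\nabla(1-u_i)|=|\nabla u_i|$), a comparison of $u$ and $1-u$ with such capacity potentials on each end shows that $u$ has genuinely different asymptotic behaviour along $E_1$ and $E_2$; were $u$ constant, the separating construction would force ${\rm cap}_p(K,E_2)=0$, contradicting $p$-nonparabolicity. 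The real difficulty relative to the linear case $p=2$ is that superposition of solutions is unavailable for the nonlinear operator, so every monotonicity and separation step must be realized through the comparison principle and capacity estimates rather than by adding barrier functions; this is the technical heart of the argument.
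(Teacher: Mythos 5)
First, note that the paper does not prove this statement at all: it is imported verbatim from Chang--Chen--Wei \cite{CCW} and used as a black box, so there is no in-paper argument to compare yours against. Your outline does follow the strategy of the cited source (a Li--Tam exhaustion scheme adapted to $\Delta_p$), and the first three stages are sound: existence and uniqueness of the $u_i$ by strict convexity of the $p$-energy, $0\le u_i\le 1$ by comparison, $C^{1,\alpha}_{loc}$ compactness from the DiBenedetto--Tolksdorf estimates, and the uniform energy bound via the fixed Lipschitz competitor $\phi$ (whose trace does agree with that of $u_i$ on $\partial\Omega_i$ for large $i$) followed by lower semicontinuity.

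The genuine gap is the non-constancy step, which you correctly identify as the heart of the matter but do not actually close. Two problems. (i) You assert that $p$-nonparabolicity of $E_2$ furnishes a capacity potential ``tending to $0$ at infinity along $E_2$.'' That is false as stated: the increasing limit $h=\lim h_i$ of the potentials $h_i$ (equal to $1$ on $\partial E$ and $0$ on $\partial\Omega_i\cap\overline E$) need not converge to $0$ along every divergent sequence in a $p$-nonparabolic end (the end may contain a parabolic ``sub-end'' along which $h\to 1$). What is true, and what you actually need, is the weaker statement $\inf_E h=0$; this itself requires proof (e.g., if $\inf_E h=m>0$, compare $h_i$ with the affine rescaling $(h-m)/(1-m)$, which is again $p$-harmonic, to force $h\equiv 1$ and contradict nonparabolicity). (ii) Even granting the correct lemma, you never write the chain that excludes $u\equiv c$. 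The comparisons you gesture at give only the one-sided bounds $u\ge 1-h^{(1)}$ on $E_1$ and $u\le h^{(2)}$ on $E_2$; by themselves these yield $\sup_{E_1}u>0$ and $\inf_{E_2}u<1$, which do not rule out an intermediate constant. The argument only closes when you combine $u\equiv c$ with $\inf_{E_1}h^{(1)}=0$ to force $c\ge 1$, and with $\inf_{E_2}h^{(2)}=0$ to force $c\le 0$, a contradiction. Since everything else in your sketch is routine, this omitted lemma and the final two-sided pinching are precisely the content of the theorem; as written, the proposal does not establish it.
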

Note that, it is known that the regularity of (weakly) $p$-harmonic
function $u$ is not better than ${\mathcal C}^{1, \alpha}_{loc}$
(see \cite{Tolk84} and the references therein). Moreover $u\in
W^{2,2}_{loc}$ if $p\geq2$; $u\in W^{2,p}_{loc}$ if $1<p<2$ (see
\cite{Tolk84}). In fact, any nontrivial (weakly) $p$-harmonic
function $u$ on $M$ is smooth away from the set $\left\{\nabla
u=0\right\}$ which has $n$-dimensional Hausdorff measure zero.
\begin{proof}[Proof of Theorem \ref{main1}] The proof follows by combining Theorem \ref{lin-thm} with $q=0, l=1$ and Theorem \ref{twoends}.
\end{proof}
\section{$p$-harmonic $\ell$-forms on Riemannian manifolds with weighted Poincar\'{e} inequality}
\setcounter{equation}{0}
\begin{lemma}\label{m-lem}
Let $M$ be a complete Riemannian manifold satisfying a weighted
Poincar\'{e} inequality with a continuous weight function $\rho$.
Suppose that $\omega$ is a closed $\ell$-form with finite $L^{p+q}$
norm on $M$ satisfies the following differential inequality
\begin{equation}\label{d-ine}
|\omega| \Delta |\omega|^{p-1} \geq B |\omega|^{p-2} \left| \nabla
|\omega|\right|^2 - \left\langle \delta d (|\omega|^{p-2}\omega),
\omega \right\rangle - a\rho |\omega|^p - b|\omega|^p,
\end{equation}
for some constants $0 < a< \dfrac{4(Q-1+B)}{Q^2}, b>0$ and $Q\geq2$. Then
the following integral inequality holds
\begin{equation}\label{m-ine}
\int_{M_+} \left| \nabla |\omega|^{Q/2} \right|^2 \leq
\dfrac{bQ^2}{4(Q-1+B) - aQ^2} \int_{M_+} |\omega|^{Q}.
\end{equation}
Moreover, if equality holds in \eqref{m-ine}, then equality holds in
\eqref{d-ine}
\end{lemma}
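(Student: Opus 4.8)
The plan is to reproduce the integral estimate from the proof of Theorem~\ref{lin-thm}, with the curvature lower bound replaced by the hypothesis \eqref{d-ine} and with the weighted Poincar\'e inequality \eqref{weighted} playing the role that $\lambda_1(M)$ played there. Write $Q=p+q$ and fix $\varphi\in\mathcal{C}_0^\infty(M_+)$. First I would multiply \eqref{d-ine} by $\varphi^2|\omega|^{q}$, integrate over $M_+$, integrate by parts in $\int_{M_+}\varphi^2|\omega|^{q+1}\Delta|\omega|^{p-1}$, and rewrite the middle term using $\int_{M_+}\varphi^2|\omega|^q\langle\delta d(|\omega|^{p-2}\omega),\omega\rangle=\int_{M_+}\langle d(|\omega|^{p-2}\omega),d(\varphi^2|\omega|^q\omega)\rangle$. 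This yields
\[
\int_{M_+}\langle\nabla(\varphi^2|\omega|^{q+1}),\nabla|\omega|^{p-1}\rangle\le -B\int_{M_+}\varphi^2|\omega|^{Q-2}|\nabla|\omega||^2+\int_{M_+}\langle d(|\omega|^{p-2}\omega),d(\varphi^2|\omega|^q\omega)\rangle+a\int_{M_+}\rho\,\varphi^2|\omega|^{Q}+b\int_{M_+}\varphi^2|\omega|^{Q}.
\]

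Next I would treat the three remaining pieces exactly as in the theorem: the left-hand side is expanded by \eqref{ls-f}, the $d$-term is bounded through Lemma~\ref{L32} as in \eqref{d-f}, and the weighted term $a\int_{M_+}\rho\,\varphi^2|\omega|^{Q}$ is controlled by applying \eqref{weighted} to $\varphi|\omega|^{Q/2}$ and expanding $|\nabla(\varphi|\omega|^{Q/2})|^2$ precisely as in \eqref{c-ie}. Collecting the coefficients of $\int_{M_+}\varphi^2|\omega|^{Q-2}|\nabla|\omega||^2$, the purely quadratic part equals $(q+1)(p-1)+B-(p-2)q-aQ^2/4=Q-1+B-aQ^2/4$, which is strictly positive thanks to $a<4(Q-1+B)/Q^2$. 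Every other term is a cross term $\int_{M_+}\varphi|\omega|^{Q-1}|\nabla|\omega|||\nabla\varphi|$ or an $\int_{M_+}|\omega|^{Q}|\nabla\varphi|^2$; absorbing the cross terms by Young's inequality \eqref{y-ine} with parameter $\varepsilon$ gives
\[
C_\varepsilon\int_{M_+}\varphi^2|\omega|^{Q-2}|\nabla|\omega||^2\le D_\varepsilon\int_{M_+}|\omega|^{Q}|\nabla\varphi|^2+b\int_{M_+}\varphi^2|\omega|^{Q},
\]
where $C_\varepsilon\to Q-1+B-aQ^2/4>0$ as $\varepsilon\to0$.

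To upgrade $\varphi\in\mathcal{C}_0^\infty(M_+)$ to an exhaustion of all of $M$, I would use the Duzaar--Fuchs truncation already exploited in \eqref{e212}--\eqref{dee212}: set $\varphi=\psi^2\eta_{\tilde\epsilon}$ with $\eta_{\tilde\epsilon}=\min\{|\omega|/\tilde\epsilon,1\}$ and $\psi\in\mathcal{C}_0^\infty(M)$; letting $\tilde\epsilon\to0$ kills the $\nabla\eta_{\tilde\epsilon}$ contributions because $|\nabla|\omega||\in L^2_{loc}(M)$ and $Q\ge2$. Taking $\psi\equiv1$ on $B_r$, supported in $B_{2r}$ with $|\nabla\psi|\le 2/r$, the term $\int|\omega|^{Q}|\nabla\psi|^2\le (4/r^2)\int_{B_{2r}\setminus B_r}|\omega|^{Q}\to0$ as $r\to\infty$ since $\omega\in L^{Q}(M)$; monotone convergence on the left simultaneously shows $\int_{M_+}|\omega|^{Q-2}|\nabla|\omega||^2<\infty$. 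Sending finally $\varepsilon\to0$ leaves $(Q-1+B-aQ^2/4)\int_{M_+}|\omega|^{Q-2}|\nabla|\omega||^2\le b\int_{M_+}|\omega|^{Q}$, which is \eqref{m-ine} after multiplying by $4/Q^2$ and using $|\nabla|\omega|^{Q/2}|^2=\tfrac{Q^2}{4}|\omega|^{Q-2}|\nabla|\omega||^2$.

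For the equality assertion I would revisit the computation in the limit $\psi\equiv1$, where all terms carrying $\nabla\psi$ and $\nabla\eta_{\tilde\epsilon}$ have already disappeared, so that \eqref{m-ine} is produced by integrating the pointwise inequality \eqref{d-ine} together with Lemma~\ref{L32} (and the Cauchy--Schwarz step underlying \eqref{d-f}) and the weighted Poincar\'e inequality \eqref{weighted}, with no $\varepsilon$-slack surviving. The defect in \eqref{m-ine} is then exactly the sum of the integrated, nonnegative defects of these inequalities, so equality in \eqref{m-ine} forces each to be an equality almost everywhere on $M_+$, and in particular forces equality in \eqref{d-ine}. The main obstacle I anticipate is precisely this limiting analysis: verifying the Duzaar--Fuchs truncation across the singular set $\{\omega=0\}$ and checking that the $\nabla\psi$- and $\nabla\eta_{\tilde\epsilon}$-terms vanish, so that no slack is concealed in the passage to the limit, is the delicate part, whereas the bookkeeping of coefficients is entirely routine.
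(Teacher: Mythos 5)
Your proposal follows essentially the same route as the paper's proof of Lemma \ref{m-lem}: multiply \eqref{d-ine} by $\varphi^2|\omega|^q$, integrate by parts, control the exterior-derivative term via Lemma \ref{L32}, apply the weighted Poincar\'e inequality to $\varphi|\omega|^{Q/2}$, absorb the cross terms with Young's inequality, and pass to the limit in $r$ and $\varepsilon$; the equality assertion is likewise obtained by squeezing the integrated defect of \eqref{d-ine} between $0$ and $0$. The only cosmetic differences are that you make the Duzaar--Fuchs truncation explicit where the paper merely cites the cutoff construction from Theorem \ref{lin-thm}, and you do not treat the compact case separately, which your argument subsumes anyway.
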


\begin{proof}
(i) Assume that the manifold is compact. Multiplying inequality
\eqref{d-ine} by $|\omega|^q$ and then integrating by parts, we
obtain
\begin{align*}
\int_{M_+}|\omega|^{q+1} \Delta |\omega|^{p-1} \geq &B \int_{M_+}|\omega|^{p+q-2} \left| \nabla |\omega|\right|^2 - \int_{M_+}\left\langle \delta d (|\omega|^{p-2}\omega), |\omega|^q\omega \right\rangle\\
& - a\int_{M_+}\rho |\omega|^{p+q} - b\int_{M_+}|\omega|^{p+q},
\end{align*}
and then,
\begin{align*}
[(q+1)(p-1)+B]\int_{M_+}|\omega|^{p+q-2} \left| \nabla |\omega|\right|^2 \leq& \int_{M_+}\left\langle d (|\omega|^{p-2}\omega), d(|\omega|^q\omega) \right\rangle\\
& + a\int_{M_+}\rho |\omega|^{p+q} + b\int_{M_+}|\omega|^{p+q}.
\end{align*}
Similarly to \eqref{d-f}, we have
\begin{align*}
\int_{M_+} \left\langle d (|\omega|^{p-2}\omega), d(|\omega|^q \omega) \right\rangle  =& \int_{M_+} \left\langle d(|\omega|^{p-2}) \wedge \omega, d(|\omega|^q) \wedge \omega \right\rangle \\
& \leq \int_{M_+} \left|d(|\omega|^{p-2}) \wedge \omega|.|d(|\omega|^q) \wedge \omega \right| \\
& \leq  \int_{M_+} \left| \nabla(|\omega|^{p-2})\right||\omega|.\left|\nabla(|\omega|^q) \right| |\omega| \\
& = (p-2)q \int_{M_+} |\omega|^{p+q-2}\left| \nabla |\omega|
\right|^2.
\end{align*}
By the weighted Poincar\'{e} inequality we have that
$$
\int_{M_+} \rho |\omega|^{p+q} \leq \int_{M_+} \left| \nabla \left(
|\omega|^{(p+q)/2} \right) \right|^2 = \dfrac{(p+q)^2}{4}\int_{M_+}
|\omega|^{p+q-2}\left| \nabla |\omega| \right|^2.
$$
Combining the last three inequalities, we obtain
$$
\left[ p+q-1 + B - a\dfrac{(p+q)^2}{4} \right]
\int_{M_+} |\omega|^{p+q-2}\left| \nabla |\omega| \right|^2 \leq b
\int_{M_+} |\omega|^{p+q},
$$
consequently,
$$
\left[ \dfrac{4(p+q-1+B)}{(p+q)^2} - a \right] \int_{M_+} \left|
\nabla |\omega|^{(p+q)/2} \right|^2 \leq b \int_{M_+} |\omega|^{p+q}.
$$
Let $Q:=p+q$, the last inequality implies that inequality \eqref{m-ine} holds.

Now assume that equality holds in \eqref{m-ine}. Multiplying
inequality \eqref{d-ine} by $|\omega|^q$ where $q=Q-p$, and then integrating by
parts and using the above estimates, we obtain
\begin{align*}
0 \leq& \int_{M_+} \left( |\omega|^{q+1} \Delta |\omega|^{p-1} - B |\omega|^{p+q-2} \left| \nabla |\omega|\right|^2 + \left\langle \delta d (|\omega|^{p-2}\omega), |\omega|^q\omega \right\rangle + a \rho |\omega|^{p+q} + b|\omega|^{p+q} \right)\\
& = -[(q+1)(p-1)+B] \int_{M_+} |\omega|^{p+q-2} \left| \nabla |\omega|\right|^2 + \int_{M_+} \left\langle  d (|\omega|^{p-2}\omega), d(|\omega|^q\omega )\right\rangle \\
& + a \int_{M_+} \rho |\omega|^{p+q} + b\int_{M_+} |\omega|^{p+q} \\
& \leq - \left[ \dfrac{4(p+q-1+B)}{(p+q)^2} - a \right] \int_{M_+} \left| \nabla |\omega|^{(p+q)/2} \right|^2 + b \int_{M_+} |\omega|^{p+q}\\
& = 0.
\end{align*}
Therefore, we can conclude that equality holds in
\eqref{d-ine} in $M_+$. Since in $M\setminus M_+$, \eqref{d-ine} is always true. We complete the proof.

(ii) Assume that the manifold is non-compact. We choose cutoff
function $\varphi \in \mathcal C^{\infty}_0(M_+)$ as in
the proof of Theorem \ref{lin-thm}. Multiplying both sides of
inequality \eqref{d-ine} by $\varphi^2 |\omega|^q$ and then
integrating by parts, we obtain
\begin{align*}
\int_{M_+} \varphi^2 |\omega|^{q+1} \Delta |\omega|^{p-1} \geq &B \int_{M_+} \varphi^2 |\omega|^{p+q-2}\left|\nabla |\omega|\right|^2 - \int_{M_+} \left\langle \delta d (|\omega|^{p-2}\omega), \varphi^2|\omega|^q \omega \right\rangle \\
& - a\int_{M_+} \rho \varphi^2 |\omega|^{p+q} - b \int_{M_+} \varphi^2
|\omega|^{p+q},
\end{align*}
and then,
\begin{align*}
& \int_{M_+} \left\langle \nabla(\varphi^2 |\omega|^{q+1}), \nabla |\omega|^{p-1}\right\rangle + B \int_{M_+} \varphi^2 |\omega|^{p+q-2}\left|\nabla |\omega|\right|^2 \\
& \leq \int_{M_+} \left\langle d (|\omega|^{p-2}\omega),
d(\varphi^2|\omega|^q \omega) \right\rangle + a\int_{M_+} \rho \varphi^2
|\omega|^{p+q} + b \int_{M_+} \varphi^2 |\omega|^{p+q},
\end{align*}
Using the last inequality and \eqref{ls-f}, \eqref{d-f}, we have
\begin{align*}
(p+q-1 + B) \int_{M_+} \varphi^2 |\omega|^{p+q-2}|\nabla |\omega||^2  \leq &a\int_{M_+} \rho \varphi^2 |\omega|^{p+q} + b \int_{M_+} \varphi^2 |\omega|^{p+q} \\
& + 2(2p-3) \int_{M_+} \varphi |\omega|^{p+q-1}\left| \nabla|\omega|
\right| |\nabla \varphi|.
\end{align*}
Similarly to \eqref{c-ie} and by the weighted Poincar\'{e}
inequality we have that
\begin{align*}
\int_{M_+} \rho \varphi^2 |\omega|^{p+q} & \leq \int_{M_+} \left| \nabla \left( \varphi |\omega|^{(p+q)/2} \right) \right|^2 \nonumber \\
& \leq (1+\varepsilon) \dfrac{(p+q)^2}{4} \int_{M_+} \varphi^2
|\omega|^{p+q-2} \left| \nabla |\omega| \right|^2 + \left( 1 +
\dfrac{1}{\varepsilon} \right) \int_{M_+} |\omega|^{p+q} |\nabla
\varphi|^2. \label{p-ine}
\end{align*}

Combining the last two inequalities and using \eqref{y-ine}, we
obtain
\begin{align*}
&\left[ p+q-1 + B - \varepsilon(2p-3) - (1+\varepsilon) \dfrac{a(p+q)^2}{4}  \right] \int_{M_+} \varphi^2 |\omega|^{p+q-2}\left| \nabla |\omega| \right|^2 \\
& \leq b \int_{M_+} \varphi^2 |\omega|^{p+q} + \left[ \dfrac{2p-3}{\varepsilon} + a\left(1 +\dfrac{1}{\varepsilon} \right)
\right]\int_{M_+} |\omega|^{p+q} |\nabla \varphi|^2.
\end{align*}
Since $4(p+q-1+B) - a(p+q)^2 >0$, $p+q-1 + B -
(1+\varepsilon) \dfrac{a(p+q)^2}{4} - \varepsilon(2p-3) >0$ for
all sufficiently small enough $\varepsilon >0$. By the monotone
convergence theorem, letting $r \to \infty$, and then $\varepsilon
\to 0$, we obtain inequality \eqref{m-ine}.

Now suppose that equality in \eqref{m-ine} holds. Multiplying both
sides of inequality \eqref{d-ine} by $\varphi^2 |\omega|^q$ and then
integrating by parts and using the above estimates, we obtain
\begin{align*}
0 \leq& \int_{M_+} \varphi^2\left( |\omega|^{q+1} \Delta |\omega|^{p-1} - B |\omega|^{p+q-2} \left| \nabla |\omega|\right|^2 + \left\langle \delta d (|\omega|^{p-2}\omega), |\omega|^q\omega \right\rangle + a \rho |\omega|^{p+q} + b|\omega|^{p+q} \right)\\
\leq& - [p+q-1 + B] \int_{M_+} \varphi^2 |\omega|^{p+q-2} \left| \nabla |\omega|\right|^2 + 2 (2p-3) \int_{M_+} \varphi |\omega|^{p+q-1}|\nabla |\omega|| \ |\nabla \varphi| \\
& + a \int_{M_+} \rho \varphi^2 |\omega|^{p+q} + b\int_{M_+} \varphi^2 |\omega|^{p+q} \\
\leq & - \left[ p+q-1 + B - \varepsilon(2p-3) - (1+\varepsilon) \dfrac{a(p+q)^2}{4}  \right] \int_{M_+} \varphi^2 |\omega|^{p+q-2}\left| \nabla |\omega| \right|^2 \\
& + b \int_{M_+} \varphi^2 |\omega|^{p+q} + \left[ \dfrac{2p-3}{\varepsilon} + a\left(1 +\dfrac{1}{\varepsilon} \right)
\right]\int_{M_+} |\omega|^{p+q} |\nabla \varphi|^2.
\end{align*}
Letting $r \to \infty$ in the last inequality and using the monotone
convergence theorem, we get
\begin{align*}
0 &\leq \int_{M_+} \left( |\omega|^{q+1} \Delta |\omega|^{p-1} - B |\omega|^{p+q-2} \left| \nabla |\omega|\right|^2 + \left\langle \delta d (|\omega|^{p-2}\omega), |\omega|^q\omega \right\rangle + a \rho |\omega|^{p+q} + b|\omega|^{p+q} \right)\\
&\leq  - \left[ p+q-1 + B - \varepsilon(2p-3) -
(1+\varepsilon) \dfrac{a(p+q)^2}{4}  \right] \int_{M_+}
|\omega|^{p+q-2}\left| \nabla |\omega| \right|^2 + b \int_{M_+}
|\omega|^{p+q}.
\end{align*}
And then putting $\varepsilon \to 0$, we obtain
\begin{align*}
0 &\leq \int_{M_+} \left( |\omega|^{q+1} \Delta |\omega|^{p-1} - B |\omega|^{p+q-2} \left| \nabla |\omega|\right|^2 + \left\langle \delta d (|\omega|^{p-2}\omega), |\omega|^q\omega \right\rangle + a \rho |\omega|^{p+q} + b|\omega|^{p+q} \right)\\
&\leq  - \left[ p+q-1 + B - \dfrac{a(p+q)^2}{4}  \right] \int_{M_+} |\omega|^{p+q-2}\left| \nabla |\omega| \right|^2 + b \int_{M_+} |\omega|^{p+q} \\
& = 0.
\end{align*}
In view of \eqref{d-ine}, we can conclude that equality holds in
\eqref{d-ine}.
\end{proof}

The following result was showed by Vieira in \cite{vieira}.

\begin{lemma}[\cite{vieira}]\label{m1-lem}
Suppose that $u$ is a smooth function on a complete Riemannian
manifold $M$ with finite $L^{Q}$ norm, for $Q\geq2$. Then
$$
\lambda_1(M) \int_{M} |u|^{Q} \leq \int_{M} \left| \nabla
u^{Q/2}\right|^2.
$$
\end{lemma}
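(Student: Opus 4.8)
The plan is to reduce the claim to the variational characterisation of $\lambda_1(M)$ applied to the single function $v:=|u|^{Q/2}$. Since $Q\geq 2$ we have $v\in W^{1,2}_{loc}(M)$ (at the zero set of $u$ the gradient of $v$ either vanishes, when $Q>2$, or $v$ is merely Lipschitz, when $Q=2$), and by hypothesis
$$\int_M v^2=\int_M|u|^Q<\infty.$$
Moreover $v^2=|u|^Q$ and $\nabla u^{Q/2}=\nabla v$, so the desired inequality is precisely $\lambda_1(M)\int_M v^2\leq\int_M|\nabla v|^2$. This is the spectral inequality $\lambda_1(M)\int_M\phi^2\leq\int_M|\nabla\phi|^2$, which by the definition of $\lambda_1(M)$ holds for every $\phi\in\mathcal C^\infty_0(M)$ and, by density, for every compactly supported $\phi\in W^{1,2}(M)$. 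The only difficulty is that $v$ itself need not have compact support, so the heart of the argument is a cut-off approximation.

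Next I would introduce the family of cut-off functions $\varphi=\varphi_r$ used in the proof of Theorem \ref{lin-thm}, that is, $\varphi_r\equiv1$ on $B_r$, $\varphi_r\equiv0$ on $M\setminus B_{2r}$, with $0\leq\varphi_r\leq1$ and $|\nabla\varphi_r|\leq 2/r$. Since $\varphi_r v$ has compact support and lies in $W^{1,2}(M)$, applying the spectral inequality to it gives
$$\lambda_1(M)\int_M\varphi_r^2v^2\leq\int_M|\nabla(\varphi_rv)|^2.$$
Expanding the right-hand side and estimating the cross term by Young's inequality $2AB\leq\delta A^2+\delta^{-1}B^2$ (as in \eqref{y-ine}), for every $\delta>0$ I obtain
$$\lambda_1(M)\int_M\varphi_r^2v^2\leq(1+\delta)\int_M\varphi_r^2|\nabla v|^2+\Big(1+\tfrac1\delta\Big)\int_Mv^2|\nabla\varphi_r|^2.$$

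Then I would pass to the limit. The last integral is controlled by
$$\int_Mv^2|\nabla\varphi_r|^2\leq\frac{4}{r^2}\int_{B_{2r}\setminus B_r}v^2\longrightarrow0\qquad(r\to\infty),$$
because $\int_Mv^2<\infty$. If $\int_M|\nabla v|^2=\infty$ the assertion is trivially true, so I may assume $\int_M|\nabla v|^2<\infty$. Letting $r\to\infty$, the monotone convergence theorem yields $\int_M\varphi_r^2v^2\to\int_Mv^2$ and $\int_M\varphi_r^2|\nabla v|^2\to\int_M|\nabla v|^2$, whence $\lambda_1(M)\int_Mv^2\leq(1+\delta)\int_M|\nabla v|^2$; finally sending $\delta\to0$ gives the required inequality.

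The step I expect to be most delicate is the control of the term $\int_Mv^2|\nabla\varphi_r|^2$ together with the a priori possibility that $\int_M|\nabla v|^2$ is infinite: the finiteness of the $L^Q$ norm of $u$ is exactly what forces the cut-off error to vanish, while the case distinction on $\int_M|\nabla v|^2$ removes any need to assume in advance that $v\in W^{1,2}(M)$ globally. A minor technical point, already noted, is the regularity of $v=|u|^{Q/2}$ on $\{u=0\}$, which is harmless for $Q\geq2$.
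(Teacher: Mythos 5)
Your argument is correct: setting $v=|u|^{Q/2}$, applying the variational characterization of $\lambda_1(M)$ to the compactly supported test functions $\varphi_r v$, absorbing the cross term by Young's inequality, and letting $r\to\infty$ and then $\delta\to 0$ is exactly the standard proof of this inequality, and your handling of the regularity of $v$ at the zero set of $u$ and of the case $\int_M|\nabla v|^2=\infty$ is sound. The paper itself does not prove this lemma but merely quotes it from Vieira's article, where the proof is the same cut-off argument you give, so your proposal supplies precisely the intended argument.
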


\begin{theorem}\label{appl1}
Let $M^n$ be a complete non-compact Riemannian manifold $M^n$
satisfying a weighted Poincar\'{e} inequality with a continuous
weight function $\rho$. Suppose that the curvature operator acting
on $\ell$-forms has a lower bound
$$
K_{\ell} \geq - a \rho - b
$$
for some constants $b>0, Q\geq2$ and 
$$0 < a < \frac{4(Q-1+(p-1)^2
(A_{p, n, \ell} -1))}{Q^2}.$$ 
Assume that the first eigenvalue of the
Laplacian has a lower bound
$$
\lambda_1 (M^n) > \dfrac{bQ^2}{4(Q-1+(p-1)^2 (A_{p, n,
\ell} -1)) - aQ^2}.
$$
Then the space of $p$-harmonic $\ell$-forms with finite $L^Q$ energy on $M^n$ is
trivial.
\end{theorem}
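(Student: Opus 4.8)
The plan is to reduce the statement to the two integral estimates already established, namely Lemma \ref{m-lem} and Vieira's Lemma \ref{m1-lem}, by first producing the pointwise differential inequality \eqref{d-ine} with the sharp Bochner constant $B=(p-1)^2(A_{p,n,\ell}-1)$.

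First I would fix an arbitrary $p$-harmonic $\ell$-form $\omega$ with finite $L^Q$ norm and set $q:=Q-p$, so that $p+q=Q\geq 2$, and work on the open set $M_+=M\setminus\{x:\omega(x)=0\}$ on which $\omega$, and hence $|\omega|$, is smooth. Applying the Bochner formula of Lemma \ref{bochner} to the form $|\omega|^{p-2}\omega$ and invoking the refined Kato inequality \eqref{kato} of Lemma \ref{kt-ine} gives, exactly as in \eqref{duti1}, the inequality
$$|\omega|\,\Delta|\omega|^{p-1}\geq (p-1)^2(A_{p,n,\ell}-1)\,|\omega|^{p-2}\,\bigl|\nabla|\omega|\bigr|^2-\bigl\langle \delta d(|\omega|^{p-2}\omega),\omega\bigr\rangle+K_\ell\,|\omega|^p.$$
Substituting the curvature hypothesis $K_\ell\geq -a\rho-b$ then turns this into precisely the differential inequality \eqref{d-ine} with $B=(p-1)^2(A_{p,n,\ell}-1)$.

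With \eqref{d-ine} in hand, the hypotheses $0<a<\tfrac{4(Q-1+B)}{Q^2}$ and $b>0$ are exactly those required by Lemma \ref{m-lem}, so I would apply that lemma to obtain the integral bound \eqref{m-ine}, namely
$$\int_{M_+}\bigl|\nabla|\omega|^{Q/2}\bigr|^2\leq \frac{bQ^2}{4(Q-1+B)-aQ^2}\int_{M_+}|\omega|^Q.$$
On the other hand, since $|\omega|$ has finite $L^Q$ norm and $|\omega|^{Q/2}\in W^{1,2}(M)$, extending by zero across $M\setminus M_+$ where $\nabla|\omega|^{Q/2}=0$ almost everywhere, Vieira's Lemma \ref{m1-lem} applied to $u=|\omega|$ yields
$$\lambda_1(M)\int_{M_+}|\omega|^Q\leq\int_{M_+}\bigl|\nabla|\omega|^{Q/2}\bigr|^2.$$
Chaining the two estimates gives $\bigl(\lambda_1(M)-\tfrac{bQ^2}{4(Q-1+B)-aQ^2}\bigr)\int_{M_+}|\omega|^Q\leq 0$, and the strict lower bound assumed on $\lambda_1(M)$ makes the bracket positive, forcing $\int_{M_+}|\omega|^Q=0$; since $\omega$ already vanishes on $M\setminus M_+$, we conclude $\omega\equiv 0$.

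I expect the only genuinely delicate point to be the bookkeeping at the zero set of $\omega$: justifying that $|\omega|^{Q/2}$ is an admissible test function in Vieira's lemma and that the integrals over $M_+$ and over $M$ coincide. This is handled, as in the proof of Theorem \ref{lin-thm}, by the Duzaar--Fuchs cutoff $\eta_{\tilde\epsilon}=\min\{|\omega|/\tilde\epsilon,1\}$ together with the facts that $\bigl|\nabla|\omega|\bigr|\in L^2_{loc}(M)$ and $Q\geq 2$, which let one pass to the limit without boundary contributions from $\{\omega=0\}$. All remaining steps are direct substitutions into the already-proved lemmas, so no further analytic obstacle arises.
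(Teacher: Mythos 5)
Your proposal follows exactly the paper's own argument: derive the differential inequality \eqref{duti1} from the Bochner formula and the refined Kato inequality, insert the curvature bound to obtain \eqref{d-ine} with $B=(p-1)^2(A_{p,n,\ell}-1)$, invoke Lemma \ref{m-lem} for the integral estimate \eqref{m-ine}, and contrast it with Vieira's Lemma \ref{m1-lem} to force $\omega\equiv 0$ via the strict lower bound on $\lambda_1(M)$. The approach and all key steps coincide with the paper's proof, so no further comparison is needed.
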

\begin{proof}
Let $\omega$ be any $p$-harmonic $\ell$-form with finite $L^{Q}$
norm. 
From the Bochner formula and the Kato type inequality (see, Lemma
\ref{kt-ine}), we obtain \eqref{duti1}
\begin{equation*}
|\omega| \Delta |\omega|^{p-1} \geq (p-1)^2 (A_{p, n, \ell} -1)
|\omega|^{p-2} \left| \nabla |\omega|\right|^2 - \left\langle \delta
d (|\omega|^{p-2}\omega), \omega \right\rangle + K_{\ell}|\omega|^p.
\end{equation*}
Hence,
\begin{equation*}
|\omega| \Delta |\omega|^{p-1} \geq (p-1)^2 (A_{p, n, \ell} -1)
|\omega|^{p-2} \left| \nabla |\omega|\right|^2 - \left\langle \delta
d (|\omega|^{p-2}\omega), \omega \right\rangle - a \rho |\omega|^p -
b |\omega|^p.
\end{equation*}
Applying Lemma \ref{m-lem} to $B =(p-1)^2 (A_{p, n, \ell} -1)$, we
obtain
$$
\int_{M_+} \left| \nabla |\omega|^{Q/2} \right|^2 \leq
\dfrac{bQ^2}{4(Q-1+(p-1)^2 (A_{p, n, \ell} -1)) - aQ^2}
\int_{M_+} |\omega|^{Q}.
$$
Since $|\omega|\in L^Q(M)$ this implies $|\omega|\in L^Q(M_+)$. Therefore, by Lemma \ref{m1-lem}, we have
$$
\lambda_1(M^n) \int_{M_+} |\omega|^{Q} \leq \int_{M_+} \left| \nabla
\omega^{Q/2}\right|^2.
$$
From the last two inequalities, we obtain
$$
\lambda_1(M^n) \int_{M_+} |\omega|^{Q} \leq \dfrac{bQ^2}{4(Q-1+(p-1)^2 (A_{p, n, \ell} -1)) - aQ^2} \int_{M_+} |\omega|^{Q}.
$$
If the $\ell$-form $\omega$ is not identically zero in $M_+$ (therefore $\omega=0$ in $M$), then
$$
\lambda_1 (M^n) \leq \dfrac{bQ^2}{4(Q-1+(p-1)^2 (A_{p,
n, \ell} -1)) - aQ^2},
$$
which leads to a contradiction. So $\omega$ is identically zero.
\end{proof}
Combining Theorem \ref{twoends} and Theorem \ref{appl1} with $Q=p\geq2,
\ell=1$, we obtain the follows result.
\begin{corollary}
Let $M^n$ be a complete non-compact Riemannian manifold $M^n$
satisfying a weighted Poincar\'{e} inequality with a continuous
weight function $\rho$. Suppose that
$$
Ric_M \geq - a \rho - b, \quad \text{ for }0 < a <
\dfrac{4(p-1)(p+n-2)}{p^2(n-1)}
$$
and some constants $b>0$. Assume that the first eigenvalue of the
Laplacian has a lower bound
$$
\lambda_1 (M^n) > \dfrac{bp^2(n-1)}{4(p-1)(n+p-2) - ap^2(n-1)}.
$$
Then the space of $L^{p}$ $p$-harmonic $1$-forms on $M^n$ is
trivial. Therefore, $M$ has at most one $p$-nonparabolic end.
\end{corollary}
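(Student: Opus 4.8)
The plan is to obtain this statement as the specialization of Theorem~\ref{appl1} to the case $\ell=1$, $Q=p\geq 2$, and then to convert the resulting vanishing statement for $p$-harmonic $1$-forms into the ``at most one $p$-nonparabolic end'' conclusion by invoking Theorem~\ref{twoends}. Since a corollary of this shape is really just a matter of matching hypotheses and reading off a contradiction, the work is in the bookkeeping rather than in any new analysis.

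First I would verify that the hypotheses here are exactly those of Theorem~\ref{appl1} once $\ell=1$. The Weitzenb\"{o}ck curvature operator $K_1$ acting on $1$-forms coincides with the Ricci curvature, so the assumption $\Ric_M \geq -a\rho-b$ is precisely $K_1\geq -a\rho-b$. Next I would record the Kato constant for $1$-forms coming from Lemma~\ref{kt-ine}, namely $(p-1)^2(A_{p,n,1}-1)=\frac{(p-1)^2}{n-1}$, and compute
\begin{equation*}
\frac{4\bigl(Q-1+(p-1)^2(A_{p,n,1}-1)\bigr)}{Q^2}
=\frac{4\bigl(p-1+\frac{(p-1)^2}{n-1}\bigr)}{p^2}
=\frac{4(p-1)(n+p-2)}{p^2(n-1)},
\end{equation*}
which is exactly the stated upper bound for $a$. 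The same substitution turns the eigenvalue threshold $\frac{bQ^2}{4(Q-1+(p-1)^2(A_{p,n,1}-1))-aQ^2}$ of Theorem~\ref{appl1} into $\frac{bp^2(n-1)}{4(p-1)(n+p-2)-ap^2(n-1)}$, matching the hypothesis on $\lambda_1(M^n)$. Hence Theorem~\ref{appl1} applies with $Q=p$, $\ell=1$ and yields that every $p$-harmonic $1$-form with finite $L^p$ energy on $M$ is trivial.

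It then remains to draw the topological consequence. Suppose, for contradiction, that $M$ had at least two $p$-nonparabolic ends. By Theorem~\ref{twoends} there would exist a non-constant bounded $p$-harmonic function $u$ with $|\nabla u|\in L^p(M)$. As noted in the introduction, $du$ is a $p$-harmonic $1$-form, and $\int_M|du|^p=\int_M|\nabla u|^p<\infty$, so $du$ has finite $L^p$ energy. The vanishing statement just established forces $du\equiv 0$, so $u$ is constant, contradicting its non-constancy. Therefore $M$ has at most one $p$-nonparabolic end.

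The only genuinely delicate point is the constant bookkeeping: one must make sure the Kato constant for $1$-forms is recorded as $\frac{(p-1)^2}{n-1}$, so that the two explicit numerical thresholds align with those of Theorem~\ref{appl1}. The regularity gap of $u$ along $\{\nabla u=0\}$ is harmless, since the integral estimates underlying Theorem~\ref{appl1} are carried out on $M_+=M\setminus\{\omega=0\}$ and never require smoothness across the zero set.
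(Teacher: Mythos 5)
Your proposal is correct and follows exactly the paper's route: the paper's entire proof of this corollary is the single line ``combining Theorem~\ref{twoends} and Theorem~\ref{appl1} with $Q=p\geq 2$, $\ell=1$,'' and you supply the same specialization together with the constant check and the contradiction argument via the non-constant $p$-harmonic function. The only caveat is that Lemma~\ref{kt-ine} actually gives the Kato constant $\min\bigl\{1,\tfrac{(p-1)^2}{n-1}\bigr\}$ rather than $\tfrac{(p-1)^2}{n-1}$, so the stated thresholds are only literally justified when $(p-1)^2\leq n-1$ --- but this imprecision is already present in the paper's own statement of the corollary, not introduced by you.
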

Let
$$ A_{n, \ell}=\begin{cases}
\dfrac{n-\ell+1}{n-\ell},&\text{if }1\leq \ell\leq\frac{n}{2}\\
\dfrac{\ell+1}{\ell},&\text{if }\frac{n}{2}\leq \ell\leq n-1.
\end{cases} $$
We conclude this section by the below rigidity property.
\begin{corollary}
Let $M^n$ be a complete non-compact Riemannian manifold $M^n$
satisfying a weighted Poincar\'{e} inequality with a continuous
weight function $\rho$. Suppose that
$$
Ric_M \geq - a \rho - b, \quad \text{ for }0 < a <  A_{n,\ell}
$$
and some constants $b>0$. Assume that
$$
\lambda_1 (M^n) =\frac{b}{A_{n,\ell}-a}.
$$
Then either
\begin{enumerate}
\item The space of $L^{2}$ harmonic $\ell$-forms on $M^n$ is trivial or;
\item  For any $L^2$ harmonic $\ell$-form $\omega$ on $M$, there exixts a $1$-form $\alpha$ such that
$$ \nabla\omega=\alpha\otimes\omega-\frac{1}{\sqrt[]{\ell+1}}\theta_1(\alpha\wedge\omega)+\frac{1}{\sqrt[]{n+1-\ell}}\theta_2(i_\alpha\omega). $$
\end{enumerate}
\end{corollary}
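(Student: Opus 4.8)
The plan is to recognize this statement as the borderline (equality) case of Theorem \ref{appl1} specialized to $p=2$, $Q=2$, and then to upgrade its conclusion by tracking the equality discussion built into Lemma \ref{m-lem} together with the equality characterization of the refined Kato inequality in Lemma \ref{kt-ine}. First I would fix the constants. Writing $B:=(p-1)^2(A_{p,n,\ell}-1)$, for $p=2$ one has $B=1/\max\{\ell,n-\ell\}$, so that $Q-1+B=1+B=A_{n,\ell}$. With this identification the hypotheses $0<a<A_{n,\ell}$ and $\lambda_1(M)=b/(A_{n,\ell}-a)$ are precisely the admissibility range $0<a<4(Q-1+B)/Q^2$ and the equality value of the lower bound on $\lambda_1(M)$ appearing in Theorem \ref{appl1}; the present corollary is thus exactly what happens when that strict inequality on $\lambda_1(M)$ is replaced by equality.

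Next I would run the proof of that theorem at this threshold. If every $L^2$ harmonic $\ell$-form vanishes we are in alternative (1) and there is nothing left to prove, so suppose $\omega$ is a nontrivial one and set $M_+:=M\setminus\{\omega=0\}$. Combining the Bochner formula of Lemma \ref{bochner} with the refined Kato inequality \eqref{kato} yields \eqref{duti1}, and the curvature hypothesis (which provides the lower bound $K_\ell\geq-a\rho-b$ needed to control the Weitzenb\"ock term) turns it into the differential inequality \eqref{d-ine} with this value of $B$. Lemma \ref{m-lem} then gives the integral inequality \eqref{m-ine}, which for $Q=2$ reads $\int_{M_+}|\nabla|\omega||^2\leq \frac{b}{A_{n,\ell}-a}\int_{M_+}|\omega|^2$. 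On the other hand, applying Vieira's Lemma \ref{m1-lem} to $u=|\omega|$ with $Q=2$ gives $\lambda_1(M)\int_{M_+}|\omega|^2\leq \int_{M_+}|\nabla|\omega||^2$. Feeding in the equality $\lambda_1(M)=b/(A_{n,\ell}-a)$ sandwiches the two estimates and forces equality throughout; in particular, equality holds in \eqref{m-ine}.

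Finally I would extract the rigidity. By the last assertion of Lemma \ref{m-lem}, equality in \eqref{m-ine} propagates back to pointwise equality in \eqref{d-ine}. Since \eqref{d-ine} was derived from the Bochner identity only by invoking the refined Kato inequality \eqref{kato} and the curvature lower bound, this saturation forces equality in \eqref{kato}; the equality case of Lemma \ref{kt-ine} (for $p=2$) then produces a $1$-form $\alpha$ with $\nabla\omega=\alpha\otimes\omega-\frac{1}{\sqrt{\ell+1}}\theta_1(\alpha\wedge\omega)+\frac{1}{\sqrt{n+1-\ell}}\theta_2(i_\alpha\omega)$, which is alternative (2). As $\omega$ was an arbitrary nontrivial $L^2$ harmonic $\ell$-form, the dichotomy follows. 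I expect the main obstacle to be exactly this equality propagation: one must check that no slack is lost in the noncompact cutoff argument (the monotone/dominated convergence and Duzaar--Fuchs truncation used in the proofs of Theorem \ref{lin-thm} and Lemma \ref{m-lem}), so that the integral equality genuinely descends to the pointwise Kato equality, and one must stay on $M_+$ throughout, since $|\omega|$ is merely Lipschitz across the zero set of $\omega$, where $\nabla|\omega|=0$ almost everywhere.
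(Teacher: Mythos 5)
Your proposal is correct and follows essentially the same route as the paper: specialize to $p=2$, $Q=2$ (so $Q-1+B=A_{n,\ell}$), combine the integral inequality of Lemma \ref{m-lem} with Vieira's Lemma \ref{m1-lem} to force equality at the threshold $\lambda_1(M)=b/(A_{n,\ell}-a)$, then use the equality clause of Lemma \ref{m-lem} to descend to pointwise equality in the Kato inequality and invoke the equality case of Lemma \ref{kt-ine}. Your additional care about the cutoff/convergence steps and about working on $M_+$ is a reasonable elaboration of what the paper leaves implicit.
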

\begin{proof}Let $p=2, q=0, Q=2$, the Bochner formula applying on harmonic $\ell$-form $\omega$ implies that
$$ |\omega|\Delta|\omega|\geq (A_{n,\ell}-1)|\nabla|\omega||^2-a\rho|\omega|^2-b|\omega|^2. $$
If $\omega$ is non-trivial then Lemma \ref{m-lem} and Lemma
\ref{m1-lem} imply that
$$ \lambda_1(M)\int_{M_+}|\omega|^2=\int_{M_+}|\nabla|\omega||^2. $$
Therefore,
$$  |\omega|\Delta|\omega|= (A_{n,\ell}-1)|\nabla|\omega||^2-a\rho|\omega|^2-b|\omega|^2 $$
on $M_+$, hence it holds true on $M$. This means that the equality in the Kato type inequality
\eqref{kato} holds true. By Lemma \ref{kt-ine}, we are done.
\end{proof}
\section{$p$-harmonic $1$-forms on locally conformally flat Riemannian manifolds}
\setcounter{equation}{0} In this section we prove vanishing theorem
for $p$-harmonic $1$-forms $(Q\geq2)$ with finite $L^Q$ energy.
We will use the following auxiliary lemmas.

It is known that a simply connected, locally conformally flat
manifold $M^n,\ n \geq 3$, has a conformal immersion into
$\mathbb{S}^n$, and according to \cite{SchoenYau1994}, the Yamabe
constant of $M^n$ satisfies
$$
\mathcal{Y}(M^n) = \mathcal{Y}(\mathbb{S}^n) = \dfrac{n(n-2)\omega_n^{2/n}}{4},
$$
where $\omega_n$ is the volume of the unit sphere in $\mathbb{R}^n$.
Therefore the following inequality
\begin{equation}\label{ya-ine}
\mathcal{Y}(\mathbb{S}^n)\left( \int_{M} f^{\frac{2n}{n-2}}dv
\right)^{\frac{n-2}{n}} \leq \int_{M} |\nabla f|^2 dv +
\dfrac{n-2}{4(n-1)}\int_{M} R f^2 dv
\end{equation}
holds for all $f \in C_0^{\infty}(M)$. To derive vanishing property of $p$-harmonic $1$-forms, we need to have following lemmas.
\begin{lemma}\label{sb-lem-1} (\cite{Lin15b})
Let $(M^n,g),\ n \geq 3$, be an $n$-dimensional complete, simply
connected, locally conformally flat Riemannian manifold with $R \leq
0$  or $ \|R\|_{n/2} < \infty$. Then the following $L^2$ Sobolev
inequality
$$
\left( \int_{M} f^{\frac{2n}{n-2}}dv \right)^{\frac{n-2}{n}} \leq S
\int_{M} |\nabla f|^2 dv, \quad \forall f \in C_0^{\infty}(M),
$$
holds for some constant $S>0$, which is equal to
$\mathcal{Y}(\mathbb{S}^n)^{-1}$ in the case of $R \leq 0$. In particular, $M$
has infinite volume.
\end{lemma}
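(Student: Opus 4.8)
The plan is to obtain both stated cases from the sharp Yamabe-type inequality \eqref{ya-ine} and then to deduce infinite volume formally from the resulting Sobolev inequality. The easy case is $R\le 0$: here the curvature term $\frac{n-2}{4(n-1)}\int_M Rf^2\,dv$ in \eqref{ya-ine} is nonpositive and may simply be discarded, so that
$$\mathcal{Y}(\mathbb{S}^n)\left(\int_M f^{\frac{2n}{n-2}}\,dv\right)^{\frac{n-2}{n}}\le \int_M|\nabla f|^2\,dv$$
for all $f\in C_0^\infty(M)$. This is exactly the asserted inequality with $S=\mathcal{Y}(\mathbb{S}^n)^{-1}$, and no further work is needed in this case.

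In the case $\|R\|_{n/2}<\infty$ the curvature term must be controlled. By H\"older's inequality,
$$\int_M |R|f^2\,dv\le \|R\|_{n/2}\left(\int_M f^{\frac{2n}{n-2}}\,dv\right)^{\frac{n-2}{n}},$$
so if $\|R\|_{n/2}$ were small enough the term could be absorbed into the left side of \eqref{ya-ine}. Since $\|R\|_{n/2}$ need not be small globally, I would instead exploit finiteness of the total curvature: fix $o\in M$ and choose $R_0$ so large that $\frac{n-2}{4(n-1)}\,\|R\|_{n/2,\,M\setminus B_{R_0}}<\tfrac12\mathcal{Y}(\mathbb{S}^n)$. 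For $f\in C_0^\infty(M\setminus B_{R_0})$ the H\"older estimate above, applied over $M\setminus B_{R_0}$, then lets me absorb the curvature term and obtain a Sobolev inequality on the end with constant comparable to $\mathcal{Y}(\mathbb{S}^n)^{-1}$. On the relatively compact core $B_{2R_0}$ I would invoke the standard local Euclidean-type Sobolev inequality.

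It remains to glue these two estimates. Using a cutoff $\phi$ equal to $1$ on $B_{R_0}$ and supported in $B_{2R_0}$, I would split a general $f\in C_0^\infty(M)$ as $f=\phi f+(1-\phi)f$, apply the compact-core inequality to $\phi f$ and the end inequality to $(1-\phi)f$, and bound the resulting gradient cross-terms $\int f^2|\nabla\phi|^2$, which are supported in the fixed compact annulus $B_{2R_0}\setminus B_{R_0}$, by the local Sobolev/Poincar\'e data there; this produces a finite global constant $S$. Finally, for the infinite-volume claim I would test the Sobolev inequality with functions $f_r$ equal to $1$ on $B_r$, supported in $B_{2r}$, with $|\nabla f_r|\le C/r$, giving $\mathrm{Vol}(B_r)^{\frac{n-2}{n}}\le SC^2 r^{-2}\,\mathrm{Vol}(B_{2r})$; were $\mathrm{Vol}(M)=V<\infty$, the right side would tend to $0$ while the left side tends to $V^{\frac{n-2}{n}}>0$, a contradiction, so $\mathrm{Vol}(M)=\infty$.

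I expect the main obstacle to be precisely this gluing step in the case $\|R\|_{n/2}<\infty$: when the curvature is not globally small, controlling the cross-terms coming from the cutoff—so as to merge the end estimate and the compact-core estimate into a single global Sobolev inequality with a finite constant—is the delicate point, whereas the $R\le 0$ case and the volume conclusion are immediate.
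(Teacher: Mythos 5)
First, a point of reference: the paper does not prove this lemma at all --- it is imported verbatim from \cite{Lin15b} --- so there is no in-paper argument to compare yours against. Judged on its own merits, your handling of the case $R\le 0$ (discard the nonpositive curvature term in \eqref{ya-ine}, giving $S=\mathcal{Y}(\mathbb{S}^n)^{-1}$) and your derivation of infinite volume from the Sobolev inequality via the test functions $f_r$ are both correct and standard.

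The case $\|R\|_{n/2}<\infty$, however, contains a genuine gap exactly where you suspect one. After splitting $f=\phi f+(1-\phi)f$, you are left with the cross term $\int_{B_{2R_0}\setminus B_{R_0}} f^2|\nabla\phi|^2$, and the plan to control it ``by the local Sobolev/Poincar\'e data'' on the annulus does not work: the restriction of $f$ to the annulus is not compactly supported there, so no Poincar\'e inequality with a pure gradient right-hand side applies to it, and the alternative bound $\int_{A} f^2\le \mathrm{Vol}(A)^{2/n}\bigl(\int_M f^{\frac{2n}{n-2}}\bigr)^{\frac{n-2}{n}}$ is circular, with a prefactor that there is no reason to make less than one (enlarging $R_0$ shrinks $|\nabla\phi|^2$ like $R_0^{-2}$ but can grow $\mathrm{Vol}(A)$ just as fast or faster). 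What your argument actually yields is the pure Sobolev inequality for functions supported outside a compact set, together with a global inequality of the weaker form $\|f\|_{2n/(n-2)}^2\le A\|\nabla f\|_2^2+B\|f\|_2^2$; removing the $L^2$ term is the real content of the lemma in this case. The standard way to finish is to invoke the non-elementary fact that validity of the (lower-order-term-free) $L^2$ Sobolev inequality is stable under compact perturbations: if it holds for all $f\in C_0^{\infty}(M\setminus K)$ with $K$ compact, then it holds for all $f\in C_0^{\infty}(M)$ with a possibly larger constant (a theorem of Carron, proved via the Faber--Krahn/heat-kernel characterization of Sobolev inequalities rather than by cutoff manipulations; cf.\ \cite{Car1}). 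That stability theorem is the missing ingredient; without it, or some substitute of comparable depth, your gluing step does not close.
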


\begin{lemma}\label{ric-lem}(\cite{Lin15b})
Let $(M^n,g)$ be an $n$-dimensional complete Riemannian manifold.
Then
$$
\Ric \geq - |T|g - \dfrac{|R|}{\sqrt{n}}g
$$
in the sense of quadratic forms. Here $T$ stands for the traceless tensor, namely
$$T=\operatorname{Ric}-\frac{R}{n}g.$$
\end{lemma}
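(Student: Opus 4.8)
The plan is to treat this as a purely pointwise, linear-algebraic statement, since both sides are symmetric $2$-tensors evaluated on the same tangent space and the inequality is claimed in the sense of quadratic forms. Fixing a point $x \in M$ and a unit vector $v \in T_xM$, it suffices to establish
$$
\Ric(v,v) \geq -|T| - \frac{|R|}{\sqrt{n}},
$$
because $g(v,v) = 1$ and $v$ is arbitrary. The starting point is the orthogonal decomposition of the Ricci tensor into its traceless and scalar parts, namely $\Ric = T + \frac{R}{n}\,g$, which is exactly the definition of $T$ supplied in the statement.

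The key elementary fact I would isolate first is that for any symmetric $2$-tensor $S$ on an inner product space and any unit vector $v$, one has $S(v,v) \geq -|S|$, where $|S|$ is the Hilbert--Schmidt norm. This follows from Cauchy--Schwarz by writing $S(v,v) = \langle S, v \otimes v\rangle$ and noting $|v \otimes v|^2 = (\sum_i v_i^2)^2 = 1$, so that $|S(v,v)| \leq |S|\,|v\otimes v| = |S|$. (Equivalently, $S(v,v)$ lies between the extreme eigenvalues of $S$, each of which has modulus at most $|S|$.)

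Next I would apply this fact twice. Applied to $T$ it gives $T(v,v) \geq -|T|$. Applied to the scalar part $S = \frac{R}{n}\,g$ it gives $\frac{R}{n}\,g(v,v) \geq -\left|\frac{R}{n}\,g\right|$, where the only computation needed is the norm of the metric: in an orthonormal frame $|g|^2 = \sum_{i,j}\delta_{ij}^2 = n$, so $|g| = \sqrt{n}$ and hence $\left|\frac{R}{n}\,g\right| = \frac{|R|}{n}\sqrt{n} = \frac{|R|}{\sqrt{n}}$. Adding the two estimates yields
$$
\Ric(v,v) = T(v,v) + \frac{R}{n}\,g(v,v) \geq -|T| - \frac{|R|}{\sqrt{n}},
$$
which is precisely the asserted bound.

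There is no serious obstacle here; the one point worth flagging is the provenance of the factor $1/\sqrt{n}$. It arises from estimating the scalar part $\frac{R}{n}\,g$ by its full tensor norm $\frac{|R|}{\sqrt{n}}$ rather than by its pointwise value $\frac{|R|}{n}$ (which would be sharper but carry a differently shaped constant). The reason to phrase the bound with $\frac{|R|}{\sqrt{n}}$ is that in the application to Theorem \ref{main2} the right-hand side is integrated against the Sobolev inequality of Lemma \ref{sb-lem-1}, where the natural quantities are the tensor norms $\|T\|_{n/2}$ and $\|R\|_{n/2}$; the $1/\sqrt{n}$ normalization is exactly what matches the hypotheses stated there.
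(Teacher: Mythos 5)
Your proof is correct. The paper itself gives no proof of this lemma (it is quoted directly from \cite{Lin15b}), but your argument --- the orthogonal decomposition $\Ric = T + \frac{R}{n}g$, the elementary bound $S(v,v)\geq -|S|$ for a symmetric $2$-tensor and unit $v$ via $S(v,v)=\langle S, v\otimes v\rangle$ with $|v\otimes v|=1$, and the computation $|g|=\sqrt{n}$ giving $\bigl|\frac{R}{n}g\bigr|=\frac{|R|}{\sqrt{n}}$ --- is exactly the standard verification and matches the one in the cited source.
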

Now, we introduce our result.
\begin{theorem}\label{lin-thm2}
Let $(M^n,g), \ n \geq 3$, be an $n$-dimensional complete, simply
connected, locally conformally flat Riemannian manifold. If one of
the following conditions
\begin{itemize}
\item[1.]
\begin{equation*}
 \|T\|_{n/2} + \dfrac{\|R\|_{n/2}}{\sqrt{n}} < \dfrac{4(Q-1 + \kappa_p)}{SQ^2}.
\end{equation*}
\item[2.] the scalar curvature $R$ is nonpositive and
$$
K_{p,Q,n}: = \dfrac{Q-1+ \kappa_p}{Q^2} -
\dfrac{n-1}{\sqrt n(n-2)} > 0
$$
and
$$
\|T\|_{n/2} < \dfrac{4K_{p,Q,n}}{S}=4K_{p,Q,n}\mathcal{Y}(\mathbb{S}^n).
$$
\end{itemize}
holds true, then every $p$-harmonic $1$-form with finite $L^{Q} (Q\geq2)$
norm on $M$ is trivial. Here 
$$\kappa_p=\min\left\{1, \frac{(p-1)^2}{n-1}\right\}.$$
\end{theorem}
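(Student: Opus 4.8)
The plan is to specialize the scheme of Theorem \ref{lin-thm} and Lemma \ref{m-lem} to $\ell = 1$, where the Weitzenb\"ock operator $K_1$ is the Ricci curvature and where $(p-1)^2(A_{p,n,1}-1)=\kappa_p$. Let $\omega$ be a $p$-harmonic $1$-form with $|\omega|\in L^Q(M)$ and set $M_+=M\setminus\{\omega=0\}$. Applying the Bochner formula (Lemma \ref{bochner}) to $|\omega|^{p-2}\omega$ together with the refined Kato inequality (Lemma \ref{kt-ine}) produces the pointwise estimate \eqref{duti1}; invoking the curvature lower bound of Lemma \ref{ric-lem} to control $K_1=\Ric(\omega,\omega)\geq -(|T|+|R|/\sqrt n)\,|\omega|^2$, I would obtain, on $M_+$,
$$|\omega|\,\Delta|\omega|^{p-1}\geq \kappa_p\,|\omega|^{p-2}|\nabla|\omega||^2-\big\langle \delta d(|\omega|^{p-2}\omega),\omega\big\rangle-\left(|T|+\tfrac{|R|}{\sqrt n}\right)|\omega|^p.$$

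Next I would multiply by $\varphi^2|\omega|^q$ for a cutoff $\varphi\in C_0^\infty(M_+)$, integrate by parts, and estimate the $d$-terms exactly as in \eqref{ls-f}--\eqref{d-f} via Lemma \ref{L32}. Collecting the $|\nabla|\omega||^2$ terms, whose coefficient simplifies to $(q+1)(p-1)+\kappa_p-(p-2)q=Q-1+\kappa_p$ with $Q=p+q$, this yields
$$(Q-1+\kappa_p)\int_{M_+}\varphi^2|\omega|^{Q-2}|\nabla|\omega||^2\leq 2(2p-3)\int_{M_+}\varphi|\omega|^{Q-1}|\nabla|\omega|||\nabla\varphi|+\int_{M_+}\left(|T|+\tfrac{|R|}{\sqrt n}\right)\varphi^2|\omega|^{Q}.$$

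The core of the argument is bounding the curvature integral by $\int|\nabla(\varphi|\omega|^{Q/2})|^2$. Under condition $1$ I would use the triangle inequality $\||T|+|R|/\sqrt n\|_{n/2}\leq \|T\|_{n/2}+\|R\|_{n/2}/\sqrt n$ followed by H\"older and the Sobolev inequality of Lemma \ref{sb-lem-1}, so the curvature integral is at most $\big(\|T\|_{n/2}+\|R\|_{n/2}/\sqrt n\big)\,S\int|\nabla(\varphi|\omega|^{Q/2})|^2$. Under condition $2$ (where $R\leq 0$ and $S=\mathcal Y(\mathbb S^n)^{-1}$) I would split the two pieces: H\"older plus Lemma \ref{sb-lem-1} give $\int|T|(\varphi|\omega|^{Q/2})^2\leq \|T\|_{n/2}\,S\int|\nabla(\varphi|\omega|^{Q/2})|^2$, while the Yamabe inequality \eqref{ya-ine} together with $R\leq 0$ gives $\tfrac{n-2}{4(n-1)}\int|R|(\varphi|\omega|^{Q/2})^2\leq \int|\nabla(\varphi|\omega|^{Q/2})|^2$, hence $\tfrac1{\sqrt n}\int|R|(\varphi|\omega|^{Q/2})^2\leq \tfrac{4(n-1)}{\sqrt n(n-2)}\int|\nabla(\varphi|\omega|^{Q/2})|^2$. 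In either case I would expand $\int|\nabla(\varphi|\omega|^{Q/2})|^2$ into its principal part $\tfrac{Q^2}{4}\varphi^2|\omega|^{Q-2}|\nabla|\omega||^2$ plus cross and $|\nabla\varphi|^2$ terms, absorb all cross terms through $2AB\leq \varepsilon A^2+\varepsilon^{-1}B^2$ as in \eqref{y-ine}, and check that the net coefficient of $\int\varphi^2|\omega|^{Q-2}|\nabla|\omega||^2$ stays positive precisely when $\|T\|_{n/2}+\|R\|_{n/2}/\sqrt n<4(Q-1+\kappa_p)/(SQ^2)$ (case $1$), respectively $\|T\|_{n/2}<4K_{p,Q,n}/S$ (case $2$), which are the standing hypotheses. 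Then, exactly as in the proof of Theorem \ref{lin-thm}, a Duzaar--Fuchs cutoff extends the inequality to $\psi\in C_0^\infty(M)$, and choosing $\psi\equiv 1$ on $B_r$ with $|\nabla\psi|\leq 2/r$ and letting $r\to\infty$ forces $|\nabla|\omega||\equiv 0$, so $|\omega|$ is constant; since Lemma \ref{sb-lem-1} guarantees that $M$ has infinite volume and $|\omega|\in L^Q(M)$, this constant is $0$ and $\omega\equiv 0$.

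I expect the main obstacle to be the bookkeeping in case $2$: matching the scalar-curvature contribution $\tfrac{|R|}{\sqrt n}$ to the $R$-term already built into the Yamabe inequality \eqref{ya-ine}. One must exploit $R\leq 0$ so that the term $-\tfrac{n-2}{4(n-1)}\int R\,(\varphi|\omega|^{Q/2})^2$ works in the favorable direction, which is exactly what generates the constant $\tfrac{n-1}{\sqrt n(n-2)}$ entering $K_{p,Q,n}$; arranging this constant and the threshold $4K_{p,Q,n}/S=4K_{p,Q,n}\mathcal Y(\mathbb S^n)$ to come out cleanly, while simultaneously tracking the two independent cross-term absorptions (one from \eqref{ls-f}--\eqref{d-f}, one from expanding $\int|\nabla(\varphi|\omega|^{Q/2})|^2$), is the delicate point.
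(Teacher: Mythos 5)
Your proposal is correct and follows essentially the same route as the paper's proof: Bochner formula on $|\omega|^{p-2}\omega$ plus the refined Kato inequality and the Ricci lower bound of Lemma \ref{ric-lem}, the cutoff/integration-by-parts scheme of \eqref{ls-f}--\eqref{d-f}, H\"older with the Sobolev inequality of Lemma \ref{sb-lem-1} for the $|T|$ (and, in case 1, the $|R|$) term, the Yamabe inequality \eqref{ya-ine} with $R\leq 0$ for the $|R|$ term in case 2, and the Duzaar--Fuchs cutoff together with infinite volume to conclude. The only cosmetic difference is that you combine $\|T\|_{n/2}+\|R\|_{n/2}/\sqrt{n}$ via the triangle inequality in case 1, whereas the paper estimates the two curvature terms separately before summing the coefficients.
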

\begin{proof}Since $M$ satisfies a Sobolev inequality, $M$ must have infinite volume. Therefore, as in the previous part, we only need to prove that if $\omega$ is any $p$-harmonic $1$-form with finite $L^{Q}$-norm then $\omega=0$ in $M_+$.
Now, applying the Bochner formula to the form $|\omega|^{p-2}\omega$, we
have
\begin{align*}
\dfrac{1}{2} \Delta |\omega|^{2(p-1)}  = &\left| \nabla \left( |\omega|^{p-2}\omega \right) \right|^2 - \left \langle \delta d (|\omega|^{p-2}\omega), |\omega|^{p-2}\omega \right \rangle \\
& +  |\omega|^{2(p-2)} \Ric(\omega,\omega) .
\end{align*}
From this and Lemma \ref{ric-lem} and the Kato type inequality (see,
Lemma \ref{kt-ine}), we obtain
\begin{equation*}
|\omega| \Delta |\omega|^{p-1} \geq \kappa_p
|\omega|^{p-2} \left| \nabla |\omega|\right|^2 - \left\langle \delta
d (|\omega|^{p-2}\omega), \omega \right\rangle - |T| |\omega|^p
-\dfrac{|R|}{\sqrt{n}} |\omega|^p.
\end{equation*}
We choose cutoff function $\varphi\in \mathcal
C^{\infty}_0(M_+)$ as in the proof of Theorem \ref{lin-thm}.
Multiplying both sides of the last inequality by $\varphi^2
|\omega|^q, (q=Q-p)$ and then integrating by parts, we obtain
\begin{align*}
\int_{M_+} \varphi^2 |\omega|^{q+1} \Delta |\omega|^{p-1}  
\geq 
&\kappa_p \int_{M_+} \varphi^2 |\omega|^{p+q-2}\left|\nabla |\omega|\right|^2 - \int_{M_+} \left\langle \delta d (|\omega|^{p-2}\omega), \varphi^2|\omega|^q \omega \right\rangle \\
& - \int_{M_+} |T| \varphi^2 |\omega|^{p+q} - \dfrac{1}{\sqrt{n}} \int_{M_+}
|R| \varphi^2 |\omega|^{p+q},
\end{align*}
and then,
\begin{align*}
& \int_{M_+} \left\langle \nabla(\varphi^2 |\omega|^{q+1}), \nabla |\omega|^{p-1}\right\rangle + \kappa_p \int_{M_+} \varphi^2 |\omega|^{p+q-2}\left|\nabla |\omega|\right|^2 \\
& \leq \int_{M_+} \left\langle d (|\omega|^{p-2}\omega),
d(\varphi^2|\omega|^q \omega) \right\rangle + \int_{M_+} |T| \varphi^2
|\omega|^{p+q} + \dfrac{1}{\sqrt{n}} \int_{M_+}  |R| \varphi^2
|\omega|^{p+q}.
\end{align*}
By the hypotheses and Lemma \ref{sb-lem-1}, we obtain that, for some
constant $S>0$ and all functions $f \in C_0^{\infty}(M)$,
$$
\left( \int_{M_+} f^{\frac{2n}{n-2}}dv \right)^{\frac{n-2}{n}} \leq S
\int_{M_+} |\nabla f|^2 dv.
$$
Using this and \eqref{c-ie}, we have that, for any $\varepsilon >0$,
\begin{align}
\int_{M_+} |T| \varphi^2 |\omega|^{p+q}
\leq &\|T\|_{n/2} \left( \int_{M_+} \left(\varphi |\omega|^{(p+q)/2} \right)^{\frac{2n}{n-2}} \right)^{\frac{n-2}{n}} \notag\\
 \leq &S \|T\|_{n/2} \int_{M_+} \left| \nabla \left( \varphi |\omega|^{(p+q)/2} \right) \right|^2 \nonumber \\
 \leq &S \|T\|_{n/2} (1+\varepsilon) \dfrac{(p+q)^2}{4} \int_{M_+} \varphi^2 |\omega|^{p+q-2} \left| \nabla |\omega| \right|^2 \nonumber \\
& + S \|T\|_{n/2}\left( 1 + \dfrac{1}{\varepsilon} \right) \int_{M_+}
|\omega|^{p+q} |\nabla \varphi|^2. \label{T-ine}
\end{align}
From \eqref{ls-f}, \eqref{d-f}, \eqref{y-ine} and \eqref{T-ine} we
obtain that, for any $\varepsilon >0$,
\begin{align}
& \left[ +q-1+\kappa_p-  \varepsilon(2p-3) - S \|T\|_{n/2} (1+\varepsilon) \dfrac{(p+q)^2}{4} \right] \int_{M_+} \varphi^2 |\omega|^{p+q-2}|\nabla |\omega||^2 \nonumber \\
& \leq \left[ \dfrac{\kappa_p}{\varepsilon} + S \|T\|_{n/2}\left( 1 +
\dfrac{1}{\varepsilon} \right) \right] \int_{M_+} |\omega|^{p+q} |\nabla
\varphi|^2 + \dfrac{1}{\sqrt{n}} \int_{M_+} \varphi^2 |R|
|\omega|^{p+q}. \label{m4-f}
\end{align}

For the last term in the right hand side of \eqref{m4-f}, we can
estimate in the following two ways.

$1.$ Similarly to \eqref{T-ine}, we have that, for any $\varepsilon
>0$,
\begin{align*}
\int_{M_+} |R| \varphi^2 |\omega|^{p+q}
& \leq S \|R\|_{n/2} (1+\varepsilon) \dfrac{(p+q)^2}{4} \int_{M_+} \varphi^2 |\omega|^{p+q-2} \left| \nabla |\omega| \right|^2 \\
& + S \|R\|_{n/2}\left( 1 + \dfrac{1}{\varepsilon} \right) \int_{M_+}
|\omega|^{p+q} |\nabla \varphi|^2.
\end{align*}
Consequently, for any $\varepsilon >0$,
$$
C_{\varepsilon} \int_{M_+} \varphi^2 |\omega|^{p+q-2}|\nabla |\omega||^2
\leq D_{\varepsilon} \int_{M_+} |\omega|^{p+q} |\nabla \varphi|^2,
$$
where
$$\begin{aligned}
C_{\varepsilon}: 
&=p+q-1+ \kappa_p -  \varepsilon(2p-3) - S (1+\varepsilon) \dfrac{(p+q)^2}{4} \left( \|T\|_{n/2} +
\dfrac{\|R\|_{n/2}}{\sqrt{n}} \right)\\
&=Q-1 + \kappa_p-  \varepsilon(2p-3)
 - S (1+\varepsilon) \dfrac{Q^2}{4} \left( \|T\|_{n/2} +
\dfrac{\|R\|_{n/2}}{\sqrt{n}} \right)
\end{aligned}$$
and
$$
D_{\varepsilon}: = \dfrac{2p-3}{\varepsilon} + S\left( 1 +
\dfrac{1}{\varepsilon} \right)\left( \|T\|_{n/2} +
\dfrac{\|R\|_{n/2}}{\sqrt{n}} \right).
$$
Since
$$
\|T\|_{n/2} + \dfrac{\|R\|_{n/2}}{\sqrt{n}} < \dfrac{4(Q-1+
(p-1)^2/(n-1))}{SQ^2},
$$
there are some small enough $\varepsilon >0$ and constant $K =
K(\varepsilon)>0$ such that
$$
\dfrac{4}{Q^2} \int_{M_+} \left|\nabla
|\omega|^{Q/2}\right|^{2}\varphi^2 \leq K\int_{M_+}
|\omega|^{Q}|\nabla\varphi|^2.
$$
Using the same argument as in the proof of the first part of Theorem \ref{lin-thm}, this inequality implies that $\omega$ is zero.

$2.$ If the scalar curvature $R$ is nonpositive, then from
\eqref{ya-ine} we have that
$$
\int_{M_+} |R|f^2 dv \leq \dfrac{4(n-1)}{n-2}\int_{M_+} |\nabla f|^2 dv,\
\forall f \in C_0^{\infty}(M).
$$
From this and \eqref{c-ie}, we have that, for any $\varepsilon >0$,
\begin{align*}
\int_{M_+} |R|\varphi^2 |\omega|^{p+q}  \leq &\dfrac{4(n-1)}{n-2}\int_{M_+} |\nabla (\varphi |\omega|^{(p+q)/2})|^2 \\
\leq &\dfrac{4(n-1)}{n-2} (1+\varepsilon) \dfrac{(p+q)^2}{4} \int_{M_+} \varphi^2 |\omega|^{p+q-2} \left| \nabla |\omega| \right|^2 \\
& + \dfrac{4(n-1)}{n-2} \left( 1 + \dfrac{1}{\varepsilon} \right)
\int_{M_+} |\omega|^{p+q} |\nabla \varphi|^2.
\end{align*}
Substituting this inequality into \eqref{m4-f} yields that, for any
$\varepsilon >0$,
$$
C_{\varepsilon} \int_{M_+} \varphi^2 |\omega|^{p+q-2}|\nabla |\omega||^2
\leq D_{\varepsilon} \int_{M_+} |\omega|^{p+q} |\nabla \varphi|^2,
$$
where
$$\begin{aligned}
C_{\varepsilon}: 
&=p+q-1+ \kappa_p -  \varepsilon(2p-3) - (1+\varepsilon) \dfrac{(p+q)^2}{4} \left( S\|T\|_{n/2} +
\dfrac{4(n-1)}{(n-2)\sqrt{n}} \right)\\
&=Q-1 + \kappa_p -  \varepsilon(2p-3)- (1+\varepsilon) \dfrac{Q^2}{4} \left( S\|T\|_{n/2} +
\dfrac{4(n-1)}{(n-2)\sqrt{n}} \right),
\end{aligned}$$
and
$$
D_{\varepsilon}: = \dfrac{2p-3}{\varepsilon} + \left( 1 +
\dfrac{1}{\varepsilon} \right)\left( S\|T\|_{n/2} +
\dfrac{4(n-1)}{(n-2)\sqrt{n}} \right).
$$
Since
$$
K_{p,Q,n}: = \dfrac{Q-1+\kappa_p}{Q^2} -
\dfrac{n-1}{\sqrt n(n-2)} > 0
$$
and
$$
\|T\|_{n/2} < \dfrac{4K_{p,Q,n}}{S},
$$
there are some small enough $\varepsilon >0$ and constant $K =
K(\varepsilon)>0$ such that
$$
\dfrac{4}{Q^2} \int_{M_+} \left|\nabla
|\omega|^{Q/2}\right|^{2}\varphi^2 \leq K\int_{M_+}
|\omega|^{Q}|\nabla\varphi|^2.
$$
As in previous part, we infer that $\omega=0$. The proof is complete.
\end{proof}
Note that when $q=0$, we recover Theorem 1.3 and Theorem 1.4 in
\cite{Lin15b}. Hence, Theorem \ref{lin-thm2} can be considered as a
generalization of Lin's results to the nonlinear setting. Now, we
give a simple proof of Theorem \ref{main2}.
\begin{proof}[Proof of Theorem \ref{main2}] The proof follows by combining Theorem \ref{twoends} and Theorem \ref{lin-thm2} with $Q=p$.
\end{proof}

\vskip 0.3cm \noindent {\bf Acknowledgment:} 
A part of this paper was written during a stay of the authors at
Vietnam Institute for Advanced Study in Mathematics (VIASM) and a
stay of the first author at Institut Fourier, UMR 5582, Grenoble. We
would to express our thanks to the staff there for the hospitality.



\begin{thebibliography}{abcd}
\bibitem[Calderbank et al. 2000]{Calderbank2000} \textsc{D. M. J. Calderbank, P. Gauduchon, and M. Herzlich}, \emph{Refined Kato inequalities and conformal weights in Riemannian geometry}, Jour. Funct. Anal., \textbf{173} (2000), 214 - 255.

\bibitem[Carron 1998]{Car1} \textsc{G. Carron}, \emph{Une suite exacte en $L^{2}$-cohomologie},
Duke Math. Jour. \textbf{95} (1998), 343 - 372

\bibitem[Carron 2007]{Car2} \textsc{G. Carron}, \emph{$L^{2}$ harmonic forms on non-compact Riemannian manifolds},
Preprint 2007, see ArXiv:0704.3194v1.

\bibitem[Chang et al. 2016]{CCW} \textsc{S. C. Chang, J. T. Chen, and S. W. Wei},
\emph{Liouville properties for $p$-harmonic maps with finite
$q$-energy}, Trans. Amer. Math. Soc., \textbf{368} (2016), 787 -
825. 

\bibitem[Chang et al. 2010]{CQS10} \textsc{L. C. Chang, C. L. Guo, and C. J. A. Sung},
\emph{$p$-harmonic $1$-forms on complete manifolds},
Arch. Math., \textbf{94} (2010) 183 - 192.

\bibitem[Chen and Sung 2009]{ChenSung2009} \textsc{J. T. R. Chen, and C. J. A. Sung},
\emph{Harmonic forms on manifolds with weighted Poincar\'{e} inequality},
Pacific Jour. Math., \textbf{242} (2009) No. 2, 201 - 214.

\bibitem[Cibotaru and Zhu 2012]{CiboZhu2012} \textsc{D. Cibotaru, and P. Zhu},
\emph{Refined Kato inequalities for harmonic fields on K\"{a}hler manifolds},
Pacific Jour. Math., \textbf{256} (2012), No.1, 51 - 66.

\bibitem[Dung 2017]{Dung2017}\textsc{N. T. Dung},
\emph{Vanishing theorems for $p$-harmonic $\ell$-forms on Riemannian
manifolds with a weighted Poincar\'{e} inequality}, Nonlinear
Analysis: TMA., \textsc{150} (2017) 138 - 150.

\bibitem[Dung and Dat 2016]{DD} \textsc{N. T. Dung, and N. D. Dat},
\emph{Local and global sharp gradient estimates for weighted
$p$-harmonic functions}, Jour. Math. Anal. Appl., \textbf{443}
(2016), 959 - 980.

\bibitem[Dung and Seo 2017]{DS} \textsc{N. T. Dung, and K. Seo},
\emph{$p$-harmonic functions and connectedness at infinity of
complete Riemannian manifolds}, Annali di Matematica Pura ed
Applicata, to appear. Doi:10.1007/s10231-016-0625-0.

\bibitem[Dung and Sung 2014]{DungSung2014} \textsc{N. T. Dung, and C. J. A. Sung},
\emph{Manifolds with a weighted Poincar\'{e} inequality},
Proc. Amer. Math. Soc., \textbf{142} (2014) No. 5, 1783 - 1794

\bibitem[Duzaar and Fuchs 1990]{Duzaar} \textsc{F. Duzaar, and M. Fuchs}, \emph{On removable singularities of $p$-harmonic maps}, Ann. Inst. H. Poincar\'{e} Anal. Non Lin\'{e}aire, \textbf{7} (1990), 385 - 405.

\bibitem[Hoffman and Spruck 1974]{Spruck} \textsc{D. Hoffman, and J. Spruck},
\emph{Sobolev and isoperimetric inequalities for Riemannian
submanifolds}, Comm. Pure Appl. Math., \textbf{27} (1974), 715-727.

\bibitem[Kotschwar and Ni 2009]{KL2009} \textsc{B. Kotschwar, and L. Ni},
\emph{Gradient estimate for $p$-harmonic function, $1/H$ flow and an
entropy formula}, Ann. Sci. \'{E}c. Norm. Sup\'{e}r., \textbf{42}
(2009), 1-36

\bibitem[Lam 2010]{Lam} \textsc{K. H. Lam}, \emph{Results on a weighted Poincar\'{e} inequality of complete manifolds.},
Trans. Amer. Math. Soc., \textbf{362} (2010), no. 10, 5043 - 5062

\bibitem[Li 2012]{Li:geometricanalysis} \textsc{P. Li}, \textbf{Geometric analysis},
Cambridge Studies in Advanced Mathematics, 2012

\bibitem[Li 1980]{Li} \textsc{P. Li},
\emph{On the Sobolev constant and the $p$-spectrum of a compact
Riemannian manifold}, Ann. Sci. \'{E}cole Norm. Sup. \textbf{13}
(1980), 451-468.

\bibitem[Li and Wang 2001]{LW1} \textsc{P. Li, and J. Wang},
\emph{Complete manifolds with positive spectrum,} Jour. Diff. Geom., \textbf{58} (2001), 501 - 534.

\bibitem[Li and Wang 2006]{LW2} \textsc{P. Li, and J. Wang},
\emph{Weighted Poincar\'{e} inequality and rigidity of complete
manifolds}, Ann. Sci. \'{E}cole Norm. Sup., \textbf{39} (2006), no.
6, 921 - 982.

\bibitem[Lin 2015a]{Lin15a} \textsc{H. Z. Lin},
\emph{On the Structure of submanifolds in Euclidean space with flat
normal bundle}, Results in Math., \textbf{68} (2015), Issue 3, pp
313-329.

\bibitem[Lin 2015b]{Lin15b} \textsc{H. Z. Lin},
\emph{On the structure of conformally flat Riemannian manifolds},
Nonlinear Analysis: TMA, \textbf{123-124} (2015), 115-125.

\bibitem[Lin 2015c]{Lin15c} \textsc{H. Z. Lin},
\emph{$L^2$ harmonic forms on submanifolds in a Hadamard manifold},
Nonlinear Analysis: TMA, \textbf{125} (2015), 310-322.

\bibitem[Lin 2016]{Lin15d} \textsc{H. Z. Lin},
\emph{On the structure of submanifolds in the hyperbolic space},
Monatshefte f\"{u}r Mathematik, \textbf{180} (2016), Issue 3, 579 -
594

\bibitem[Moser 2007]{Mo2007} \textsc{R. Moser},
\emph{The inverse mean curvature flow and $p-$harmonic functions},
Jour. Eur. Math. Soc., \textbf{9} (2007), 77-83.

\bibitem[Nakauchi 1998]{Nakauchi} \textsc{N. Nakauchi}, \emph{A Liouville type theorem for $p$-harmonic maps}, Osaka J. Math. \textbf{35}  (1998),  no. 2, 303-312.

\bibitem[Schoen and Yau 1994]{SchoenYau1994} \textsc{R. Schoen, and S.T. Yau},
\textbf{Lectures on Differential Geometry}, Internatioal Press, 1994.

\bibitem[Seo 2014]{Seo} \textsc{K. Seo},
\emph{$L^p$ harmonic $1$-forms and first eigenvalue of a stable
minimal hypersurface}, Pacific Jour. Math., \textbf{268} (2014), no.
1, 205-229.

\bibitem[Sung and Wang 2014]{SW} \textsc{C. J. A. Sung, and J. Wang},
\emph{Sharp gradient estimate and spectral rigidity for
$p$-Laplacian}, Math. Res. Lett., \textbf{21}  (2014),  no. 4,
885-904.

\bibitem[Tanno 1996]{Tanno96} \textsc{S. Tanno}, \emph{$L^{2}$ harmonic forms and stability of
minimal hypersurfaces}, Jour. Math. Soc. Japan, \textbf{48} (1996)
761- 768.

\bibitem[Tolksdorff 1984]{Tolk84} \textsc{P. Tolksdorf},
\emph{Regularity for a more general class of quasilinear elliptic equations},
Jour. Diff. Equa., \textbf{51} (1984), no. 1, 126-150.

\bibitem[Vieira 2014]{vieira} \textsc{M. Vieira}, \emph{Vanishing theorems for $L^{2}$ harmonic forms on complete
Riemannian manifolds}, Geom. Dedicata, \textbf{184} (2016), 175-191.

\bibitem[Wan and Xin 2004]{WanXin2004} \textsc{Tom Y. H. Wan, and Y. L. Xin},
\emph{Vanishing Theorems for Conformally Compact Manifolds}, Comm.
Partial Diff. Equa., \textbf{29} (2004), No. 7-8, 1267-1279.

\bibitem[Wang 2002]{XDWang2002} \textsc{X. D. Wang},
\emph{On the $L^{2}$-Cohomology of a convex cocompact hyperbolic
manifold}, Duke Math. Jour., \textbf{115} (2002), No.2, 311-327.

\bibitem[Zhang 2001]{Zhang} \textsc{X. Zhang}, \emph{A note on $p$-harmonic $1$-forms on complete manifolds}, Canad. Math.
Bull., \textbf{44} (2001) 376 - 384.

\bibitem[Zhu 2011]{Zhu2011}\textsc{P. Zhu}, \emph{Harmonic two forms on manifolds with nonnegative isotropic curvature}, Ann. Global Anal. Geom., \textbf{40} (2011), 427 - 434.

\bibitem[Zhu 2017]{Zhu2015}\textsc{P. Zhu}, \emph{Rigidity of Complete Minimal Hypersurfaces
in the Euclidean Space}, Results. Math., \textbf{71} (2017), 63 - 71.

\bibitem[Zhu 2001]{Zhu01} \textsc{P. Zhu}, \emph{$L^{2}$-harmonic forms and stable hypersurfaces in space
forms}, Arch. der Math., \textbf{97} (2001), 271-279.
\end{thebibliography}
\end{document}